
\documentclass[10pt]{article}

\usepackage{amsxtra,amssymb,amsthm,amsmath,latexsym}
\usepackage{graphicx}
\usepackage{epsfig}
\usepackage{afterpage}
\newtheorem{thm}{Theorem}[section]

\newtheorem{lem}[thm]{Lemma}
\newtheorem{rem}[thm]{Remark}

 \newcommand{\thmref}[1]{Theorem~\ref{#1}}
 \newcommand{\lemref}[1]{Lemma~\ref{#1}}

\newcommand{\R}{{\mathbb R}}

\newcommand{\vp}{{\varphi}}
\newcommand{\la}{{\langle}}
\newcommand{\ra}{{\rangle}}
\newcommand{\dl}{{\delta}}

\newcommand{\bee}{\begin{equation*}}
\newcommand{\eee}{\end{equation*}}
\newcommand{\be}{\begin{equation}}
\newcommand{\ee}{\end{equation}}
\newcommand{\pn}{\par\noindent}
\title{A collocation method for solving some integral equations in
distributions}
\author{Sapto W. Indratno\\
\small Department of Mathematics\\[-0.8ex]
\small Kansas State University, Manhattan, KS 66506-2602, USA\\
\small Department of Mathematics\\[-0.8ex]
\small Bandung Institute of Technology, Bandung, Indonesia\\
\small \texttt{sapto@math.itb.ac.id}
\and
A G Ramm\\
\small Department of Mathematics\\[-0.8ex]
\small Kansas State University, Manhattan, KS 66506-2602, USA\\[-0.8ex]
\small \texttt{ramm@math.ksu.edu}\\
}

\begin{document}
\date{}
\maketitle

\begin{abstract}
A collocation method is presented for numerical solution of a
typical integral equation $Rh:=\int_D R(x,y)h(y)dy=f(x),\quad x\in
\overline{D}$ of the class $\mathcal{R}$, whose kernels are of
positive rational functions of arbitrary selfadjoint elliptic
operators defined in the whole space $\R^r$, and $D\subset \R^r$ is
a bounded domain. Several numerical examples are given to
demonstrate the efficiency and stability of the proposed method.
\end{abstract}
\pn{\\ {\em MSC: 45A05, 45P05, 46F05, 62M40, 65R20, 74H15 }   \\
{\em Key words:} integral equations in distributions, signal
estimation, collocation method.}

\section{Introduction}
In \cite{RAMM1} a general theory of integral equations of the class
$\mathcal{R}$ was developed. The integral equations of the class
$\mathcal{R}$ are written in the following form: \be\label{R1}
Rh:=\int_D R(x,y)h(y)dy=f(x),\quad x\in \overline{D}:=D\cup \Gamma,
\ee where $D\in \R^r$ is a (bounded) domain with a (smooth) boundary
$\Gamma$. Here the kernel $R(x,y)$ has the following form
\cite{RAMM1,R486,RAMM3,RAMM4}: \be\label{R2}
R(x,y)=\int_{\Lambda}P(\lambda)Q^{-1}(\lambda)\Phi(x,y,\lambda)d\rho(\lambda),
\ee where $P(\lambda)$, $Q(\lambda)>0$ are polynomials, deg$P=p$,
deg$Q=q$, $q>p$, and $\Phi$, $\rho$, $\Lambda$ are the spectral
kernel, spectral measure, and spectrum of a selfadjoint elliptic
operator $\mathcal{L}$ on $L^2(\R^r)$ of order $s$. It was also
proved in \cite{RAMM1} that $R: \dot{H}^{-\alpha}(D)\to H^\alpha(D)$
is an isomorphism, where $H^\alpha(D)$ is the Sobolev space and
$\dot{H}^{-\alpha}(D)$ its dual space with respect to the $L^2(D)$
inner product, $\alpha=\frac {s(q-p)}{2}$. Here the space $\dot{H}^{-\alpha}(D)$ consists of distributions in $H^\alpha(\R^r)$ with support in the closure of $D$. In this paper we consider
a particular type of integral equations of the class $\mathcal{R}$
with $D=(-1,1)$, $r=1$, $\mathcal{L}=-i\partial$,
$\partial:=\frac{d}{dx}$, $\Lambda\in(-\infty,\infty)$,
$d\rho(\lambda)=d\lambda$,
$\Phi(x,y,\lambda)=\frac{e^{i\lambda(x-y)}}{2\pi}$, $P(\lambda)=1$,
$Q(\lambda)=\frac{\lambda^2+1}{2}$, $s=1$, $p=0$, $q=2$ and
$\alpha=1$, i.e., \be\label{e1}
Rh(x):=\int_{-1}^1e^{-|x-y|}h(y)dy=f(x), \ee where $h\in
\dot{H}^{-1}[-1,1]$ and $f\in H^{1}[-1,1]$. We denote the inner
product and norm in $H^1[-1,1]$ by \be\label{e2} \la
u,v\ra_1:=\int_{-1}^1\left(u(x)\overline{v(x)}+
u'(x)\overline{v'(x)}dx\right) dx \quad u,v\in H^1([-1,1]),\ee and
\be\label{e3} \|u\|_1^2:=\int_{-1}^1\left(|u(x)|^2+
|u'(x)|^2\right)dx, \ee respectively, where the primes denote
derivatives and the bar stands for complex conjugate. If $u$ and $v$
are real-valued functions in $H^1[-1,1]$ then the bar notations
given in \eqref{e2} can be dropped. Note that if $f$ is a complex
valued function then solving equation \eqref{e1} is equivalent to
solving the equations: \be
\int_{-1}^1e^{-|x-y|}h_k(y)dy=f_k(x),\quad k=1,2,\ee where
$h_1(x):=$Re$h(x)$, $h_2(x):=$Im$h(x)$, $f_1(x):=$Re$f(x)$,
$f_2(x):=$Im$f(x)$ and $h(x)=h_1(x)+ih_2(x)$, $i=\sqrt{-1}$.
Therefore, without loss of generality we assume throughout that
$f(x)$ is real-valued.

It was proved in \cite{ R486} that the operator $R$ defined in
\eqref{e1} is an isomorphism between $\dot{H}^{-1}[-1,1]$ and
$H^1[-1,1]$. Therefore, problem \eqref{e1} is well posed in the
sense that small changes in the data $f(x)$ in the $H^1[-1,1]$
norm will result in small in $\dot{H}^{-1}[-1,1]$ norm changes to
the solution $h(y)$. Moreover, the solution to \eqref{e1} can be
written in the following form: \be\label{e4}
h(x)=a_{-1}\dl(x+1)+a_0\dl(x-1)+g(x),\ee where \be\label{e5}
a_{-1}:=\frac{f(-1)-f'(-1)}{2},\quad a_0:=\frac{f'(1)+f(1)}{2},\ee
\be\label{e6} g(x):=\frac{-f''(x)+f(x)}{2}, \ee and $\dl(x)$ is the
delta function. Here and throughout this paper we assume that $f\in
C^\alpha[-1,1]$, $\alpha\geq 2$. It follows from \eqref{e5} that
$h(x)=g(x)$ if and only if $f(-1)=f'(-1)$ and $f(1)=-f'(1)$.

In \cite{RAMM3,RAMM4} the problem of solving equation \eqref{e1}
numerically have been posed and solved. The least squares method was
used in these papers. The goal of this paper is to develop a version
of the collocation method which can be applied easily and
numerically efficiently. In \cite{RAMM5} some basic ideas
for using collocation method are proposed. In this paper some of these
ideas are used and new ideas, related to the choice of the basis
functions, are introduced. In this paper the emphasis is on the
development of methodology for solving basic equation (1) of the
estimation theory by a version of the collocation method.
The novelty of this version consists in minimization of a
discrepancy functional (26), see below. This methodology is illustrated
by a detailed analysis applied to solving equation (3), but it is
applicable to general equations of the class $\mathcal{R}$. One of the
goals of this paper is to demonstrate that collocation method can
be successfully applied to numerical solution of some integral equations
whose solutions are distributions, provided that the theoretical analysis
gives sufficient information about the singular part of the solutions.

Since $f\in C^\alpha[-1,1]$, $\alpha\geq
2$, it follows from \eqref{e6} that $g\in C[-1,1]$. Therefore,
there exist basis functions $\vp_j(x)\in C[-1,1]$,
$j=1,2,\hdots,m$, such that \be\label{e6a}
\max_{x\in[-1,1]}|g(x)-g_m(x)|\to 0\quad \text{as }m\to \infty, \ee
where \be\label{e6b} g_m(x):=\sum_{j=1}^mc_j^{(m)}\vp_j(x), \ee
$c^{(m)}_j$, $j=1,2,\hdots,m$, are constants. Hence the approximate
solution of equation \eqref{e1} can be represented by \be\label{e7}
h_m(x)=c_{-1}^{(m)}\dl(x+1)+c_0^{(m)}\dl(x-1)+g_m(x), \ee where
$c_j^{(m)}$, $j=-1,0$, are constants and $g_m(x)$ is defined in
\eqref{e6b}. The basis functions $\vp_j$ play an important role in
our method. It is proved in Section 3 that the basis functions
$\vp_j$ in \eqref{e7} can be chosen from the linear B-splines. The
usage of the linear B-splines reduces the computation time, because
computing \eqref{e7} at a particular point $x$ requires at most two
out of the $m$ basis functions $\vp_j$. For a more detailed
discussion of the family of B-splines we refer to \cite{LLS}. In
Section 2 we derive a method for obtaining the coefficients
$c_j^{(m)}$, $j=-1,0,1,\hdots,m$, given in \eqref{e7}. This method
is based on solving a finite-dimensional least squares problem ( see
equation \eqref{e26} below ) and differs from the usual collocation
method discussed in \cite{KA} and \cite{MP}. We approximate
$\|f-Rh_m\|_1^2$ by a quadrature formula.
The resulting finite-dimensional linear
algebraic system depends on the choice of the basis functions. Using
linear B-splines as the basis functions, we prove the existence and
uniqueness of the solution to this linear algebraic system for all
$m=m(n)$ depending on the number $n$ of collocation points used in the
left rectangle quadrature rule. The convergence of our collocation method
is proved
in this Section. An example of the choice of the basis functions which
yields the convergence of our version of the collocation method is given
in Section 3. In
Section 4  we give numerical results of applying our method to
several problems that discussed in \cite{RAMM4}.

\section{The collocation method}
In this Section we derive a collocation method for solving
equation \eqref{e1}. From equation \eqref{e1} we
get\be\label{e8}\begin{split}
Rh(x)&=a_{-1}e^{-(x+1)}+a_0e^{-(1-x)}\\
&+\left(e^{-x}\int_{-1}^xe^{y}g(y)dy+e^x\int_x^1e^{-y}g(y)dy
\right)=f(x). \end{split}\ee
Assuming that $f\in
C^2([-1,1])$ and differentiating the above equation, one obtains
\be\label{e9}\begin{split}
(Rh)'(x)&=-a_{-1}e^{-(x+1)}+a_0e^{-(1-x)}\\
&+\left(e^x\int_x^1e^{-y}g(y)dy-e^{-x}\int_{-1}^xe^{y}g(y)dy\right)=f'(x).
\end{split}\ee Thus, $f(x)$ and $f'(x)$ are continuous in the
interval $[-1,1]$.
Let us use the approximate solution given in
\eqref{e7}. From \eqref{e8}, \eqref{e9} and \eqref{e7} we obtain
\be\label{e10}\begin{split}
Rh_m(x)&=c_{-1}^{(m)}e^{-(x+1)}+c_0^{(m)}e^{-(1-x)}\\
&+\sum_{j=1}^{m}c_j^{(m)}\left(e^{-x}\int_{-1}^xe^{y}\vp_j(y)dy+e^x\int_x^1e^{-y}\vp_j(y)dy
\right):=f_m(x),
\end{split}\ee and
\be\label{e11}\begin{split}
(Rh_m)'(x)&=-c_{-1}^{(m)}e^{-(x+1)}+c_0^{(m)}e^{-(1-x)}\\
&+\sum_{j=1}^{m}c_j^{(m)}\left[e^x\int_x^1e^{-y}\vp_j(y)dy-e^{-x}\int_{-1}^xe^{y}\vp_j(y)dy\right]:=(f_m)'(x).
\end{split}\ee Thus,  $Rh_m(x)$ and $(Rh_m)'(x)$ are continuous in the
interval $[-1,1]$. Since $f(x)-Rh_m(x)$ and $f'(x)-(Rh_m)'(x)$ are
continuous in the interval $[-1,1]$, we may assume throughout that
the functions \be\label{e12} J_{1,m}:=[f(x)-Rh_m(x)]^2\ee and
\be\label{e13} J_{2,m}:=[f'(x)-(Rh_m)'(x)]^2\ee are
Riemann-integrable over the interval $[-1,1]$.

Let us define
\be\begin{split}\label{e14} q_{-1}(x)&:=e^{-(x+1)},\ q_0(x):=e^{-(1-x)},\\
q_j(x)&:=\int_{-1}^1e^{-|x-y|}\vp_j(y)dy,\quad j=1,2,\hdots,m,
\end{split}\ee and define a mapping $\mathcal{H}_n:C^2[-1,1]\to
\R^{n}_1$ by the formula:
\be\label{e15} \mathcal{H}_n\phi=\left(
                                            \begin{array}{c}
                                             \phi(x_1) \\
                                             \phi(x_2) \\
                                              \vdots \\
                                              \phi(x_n)\\
                                            \end{array}
                                          \right),\ \phi(x) \in
C^2[-1,1],\ee where \be\label{e16}\R^{n}_1:=\left\{\left(
                                            \begin{array}{c}
                                              z_1 \\
                                              z_2 \\
                                              \vdots \\
                                              z_{n} \\
                                            \end{array}
                                          \right)\in \R^n \ |\ z_j:=z(x_j),\ z\in
C^2[-1,1]\right\}\ee and $x_j$ are some collocation points which
will be chosen later. We equip the space $\R^{n}_1$ with the
following inner product and norm
 \be\label{e17} \la
u,v\ra_{w^{(n)},1}:=\sum_{j=1}^nw_j^{(n)}(u_jv_j+u'_jv'_j),\quad
u,v\in \R^n_1,\ee \be\label{e18}
\|u\|_{w^{(n)},1}^2:=\sum_{j=1}^nw_j^{(n)}[u_j^2+(u'_j)^2],\quad
u\in \R^n_1, \ee respectively, where $u_j:=u(x_j)$, $u'_j:=u'(x_j)$,
$v_j:=v(x_j)$, $v'_j:=v'(x_j)$, and $w_j>0$ are some quadrature
weights corresponding to the collocation points $x_j$, $j=1,2,\hdots,n$.

Applying $\mathcal{H}_n$ to $Rh_m$, one
gets\be\label{e19}\begin{split}
(\mathcal{H}_nRh_m)_i&=c_{-1}^{(m)}e^{-(1+x_i)}+c_0^{(m)}e^{-(1-x_i)}+
\sum_{j=1}^{m}c_j^{(m)}\int_{-1}^{x_i}e^{-(x_i-y)}\vp_j(y)dy+\\
&\sum_{j=1}^{m}c_j^{(m)}\int_{x_i}^{1}e^{-(y-x_i)}\vp_j(y)dy,\
i=1,2,\hdots,n, \end{split}\ee where $m=m(n)$ is an integer depending on
$n$ such that \be\label{e20} m(n)+2\leq n, \ \lim_{n\to\infty}m(n)=\infty.
\ee

Let \be\label{e21}
G_n(c^{(m)}):=\|\mathcal{H}_{n}(f-Rh_m)\|_{w^{(n)},1}^2,\ee where
$f$ and $Rh_m$ are defined in \eqref{e1} and \eqref{e10},
respectively, $\mathcal{H}_n$ defined in \eqref{e15},
$\|\cdot\|_{w^{(n)},1}$ defined in \eqref{e18} and $c^{(m)}=\left(
                                                          \begin{array}{c}
                                                            c_{-1} \\
                                                            c_0 \\
                                                            c_1 \\
                                                            \vdots \\
                                                            c_{m} \\
                                                          \end{array}
                                                        \right)
\in \R^{m+2}$. Let us choose \be\label{e22} w_j^{(n)}=\frac{2}{n},\
j=1,2,\hdots,n,\ee and \be\label{e23} x_j=-1+(j-1)s,\
s:=\frac{2}{n},\ j=1,2,\hdots,n, \ee so that
$\|\mathcal{H}_{n}(f-Rh_m)\|_{w^{(n)},1}^2$ is the left Riemannian
sum of $\|f-Rh_m\|^2_1$, i.e., \be\label{e24}
|\|f-Rh_m\|^2_1-\|\mathcal{H}_{n}(f-Rh_m)\|_{w^{(n)},1}^2|:=\dl_n\to
0\text{ as }n\to \infty. \ee

\begin{rem}
If $J_{1,m}$ and $J_{2,m}$ are in $C^2[-1,1]$, where $J_{1,m}$ and
$J_{2,m}$ are defined in \eqref{e12} and \eqref{e13}, respectively,
then one may replace the weights $w_j^{(n)}$ with the weights of the
compound trapezoidal rule, and get the estimate
\be\begin{split}\label{e25}
\dl_n&=\left|\int_{-1}^1(J_{1,m}(x)+J_{2,m}(x))dx-\sum_{j=1}^nw_j^{(n)}(J_{1,m}(x_j)+J_{2,m}(x_j))\right|\\
&\leq \frac{1}{3n^2}D_J,\end{split} \ee where $\dl_n$ is defined in
\eqref{e24} and \be
D_J:=|J'_{1,m}(1)+J'_{2,m}(1)-(J'_{1,m}(-1)+J'_{2,m}(-1) )| .\ee
Here we have used the following estimate of the compound trapezoidal
rule \cite{PDPR84, ER80}: \be
\left|\int_a^b\eta(x)dx-\sum_{j=1}^nw_j^{(n)}\eta(x_j)\right|\leq
\frac{(b-a)^2}{12n^2}\left|\int_a^b\eta''(x)dx\right|=
\frac{(b-a)^2}{12n^2}|\eta'(b)-\eta'(a)|,\ee where $\eta\in
C^2[a,b]$. Therefore, if $D_J\leq C$ for all $m$, where $C>0$ is a
constant, then $\dl_n=O\left(\frac{1}{n^2}\right).$
\end{rem}

The constants $c_j^{(m)}$ in the approximate solution $h_m$, see
\eqref{e7}, are obtained by solving the following least squares
problem: \be\label{e26} \min_{c^{(m)}}G_n(c^{(m)}),\ee where $G_n$
is defined in \eqref{e21}.

A necessary condition for the minimum in \eqref{e26} is
\be\label{e27} 0=\sum_{l=1}^{n}w_l^{(n)}\left(E_{m,l} \frac{\partial
E_{m,l}}{\partial c_k}+E_{m,l}' \frac{\partial E_{m,l}'}{\partial
c_k}\right)\quad k=-1,0,1,\hdots,m, \ee where \be\label{e28}
E_{m,l}:=(f-Rh_m)(x_l),\quad E'_{m,l}:=(f-Rh_m)'(x_l),\
l=1,2,\hdots,n. \ee

Necessary condition \eqref{e27} yields the following linear algebraic
system (LAS): \be\label{e29} A_{m+2}c^{(m)}=F_{m+2}, \ee where
$c^{(m)}\in \R^{m+2}$, $A_{m+2}$ is a square, symmetric matrix with
the following entries: \be\begin{split}\label{e30}
(A_{m+2})_{1,1}&:=2\sum_{l=1}^{n}w_l^{(n)}e^{-2(x_l+1)},\ (A_{m+2})_{1,2}=0,\\
(A_{m+2})_{1,j}&:=2\sum_{l=1}^{n}w_l^{(n)}C_{l,j-2}e^{-(x_l+1)},\
j=3,\hdots,m+2,
\end{split}\ee
\be\label{e31}\begin{split} (A_{m+2})_{2,2}&:=2\sum_{l=1}^{n}w_l^{(n)}e^{-2(1-x_l)},\\
(A_{m+2})_{2,j}&:=2\sum_{l=1}^{n}w_l^{(n)}B_{l,j-2}e^{-(1-x_l)},\
j=3,\hdots,m+2, \end{split}\ee

\be\begin{split}\label{e32}
(A_{m+2})_{i,j}&:=\sum_{l=1}^{n}2w_l^{(n)}(B_{l,i-2}B_{l,j-2}+C_{l,i-2}C_{l,j-2}),\\
i&=3,\hdots,m+2,\ j=i,\hdots,m+2,\\
(A_{m+2})_{j,i}&=(A_{m+2})_{i,j},\quad i,j=1,2,\hdots,m+2,
\end{split}\ee
$F_{m+2}$ is a vector in $\R^{m+2}$ with the following elements:
\be\begin{split}\label{e33}
(F_{m+2})_1&:=\sum_{l=1}^{n}w_l^{(n)}(f(x_l)-f'(x_l))e^{-(x_l+1)}=\la \mathcal{H}_nRh,\mathcal{H}q_{-1}\ra_{w^{(n)},1}\\
(F_{m+2})_2&:=\sum_{l=1}^{n}w_l^{(n)}(f(x_l)+f'(x_l))e^{-(1-x_l)}=\la \mathcal{H}_nRh,\mathcal{H}q_{0}\ra_{w^{(n)},1},\\
(F_{m+2})_i&:=\sum_{l=1}^{n}w_l^{(n)}[f(x_l)(C_{l,i-2}+B_{l,i-2})+f'(x_l)(B_{l,i-2}-C_{l,i-2})]\\
&=\la \mathcal{H}_nRh,\mathcal{H}q_{i}\ra_{w^{(n)},1},\
i=3,\hdots,m+2,
\end{split}\ee
and
\be\label{e34} B_{l,j}:=\int_{x_l}^1e^{-(y-x_l)}\vp_j(y)dy,\
C_{l,j}:=\int_{-1}^{x_l}e^{-(x_l-y)}\vp_j(y)dy.\ee

\begin{thm}\label{thm0}
Assume that the vectors $\mathcal{H}_{n}q_j$, $j=-1,0,1,\hdots,m$
are linearly independent. Then linear algebraic system \eqref{e29}
is uniquely solvable for all $m$, where $m$ is an integer depending
on $n$ such that \eqref{e20} holds.
\end{thm}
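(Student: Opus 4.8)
The plan is to recognize the coefficient matrix $A_{m+2}$ in \eqref{e29} as the Gram matrix of the vectors $\mathcal{H}_n q_j$, $j=-1,0,1,\hdots,m$, with respect to the inner product $\la\cdot,\cdot\ra_{w^{(n)},1}$ of \eqref{e17}, and then to invoke the standard fact that a Gram matrix is positive definite precisely when its generating vectors are linearly independent. Since $Rh_m=\sum_{j=-1}^m c_j^{(m)}q_j$ with $q_j$ as in \eqref{e14}, the functional $G_n$ in \eqref{e21} equals $\|\mathcal{H}_n f-\sum_{j=-1}^m c_j^{(m)}\mathcal{H}_n q_j\|_{w^{(n)},1}^2$, which is a quadratic form in $c^{(m)}$, and the necessary condition \eqref{e27} is exactly its stationarity condition. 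Differentiating $G_n$ shows that \eqref{e29} is the normal system $\sum_{j}\la\mathcal{H}_n q_k,\mathcal{H}_n q_j\ra_{w^{(n)},1}\,c_j=\la\mathcal{H}_n f,\mathcal{H}_n q_k\ra_{w^{(n)},1}$, so $A_{m+2}$ should be the Gram matrix of the $\mathcal{H}_n q_j$.

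First I would verify this identification against the explicit entries \eqref{e30}--\eqref{e32}. Using the splitting $q_j(x)=e^{-x}\int_{-1}^x e^y\vp_j\,dy+e^x\int_x^1 e^{-y}\vp_j\,dy$ and the definitions \eqref{e34} of $B_{l,j},C_{l,j}$, one gets $q_j(x_l)=C_{l,j}+B_{l,j}$, and from \eqref{e11} one gets $q_j'(x_l)=B_{l,j}-C_{l,j}$. Hence $q_i(x_l)q_j(x_l)+q_i'(x_l)q_j'(x_l)=2(B_{l,i}B_{l,j}+C_{l,i}C_{l,j})$ once the cross terms cancel, which matches \eqref{e32}; the rows and columns indexed by $q_{-1},q_0$ follow similarly from $q_{-1}'=-q_{-1}$ and $q_0'=q_0$, reproducing \eqref{e30}--\eqref{e31} (including $(A_{m+2})_{1,2}=0$). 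This confirms that $(A_{m+2})_{k,j}=\la\mathcal{H}_n q_k,\mathcal{H}_n q_j\ra_{w^{(n)},1}$, with the index shift that places $q_{-1},q_0$ in the first two rows.

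With the identification in hand the argument is short. For any $\xi=(\xi_{-1},\hdots,\xi_m)^{T}\in\R^{m+2}$, bilinearity gives
\bee
\xi^{T}A_{m+2}\xi=\left\langle\sum_{j=-1}^m\xi_j\mathcal{H}_n q_j,\ \sum_{k=-1}^m\xi_k\mathcal{H}_n q_k\right\rangle_{w^{(n)},1}=\left\|\sum_{j=-1}^m\xi_j\mathcal{H}_n q_j\right\|_{w^{(n)},1}^2\ge0,
\eee
so $A_{m+2}$ is symmetric and positive semidefinite. Because the weights satisfy $w_j^{(n)}>0$ (see \eqref{e22}), the form $\la\cdot,\cdot\ra_{w^{(n)},1}$ is positive definite, hence $\xi^{T}A_{m+2}\xi=0$ forces $\sum_{j}\xi_j\mathcal{H}_n q_j=0$. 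By the hypothesis that the $\mathcal{H}_n q_j$ are linearly independent this yields $\xi=0$, so $A_{m+2}$ is positive definite, in particular nonsingular; therefore \eqref{e29} has a unique solution for every admissible $m=m(n)$ satisfying \eqref{e20}.

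The routine-but-careful part is the verification in the second paragraph that the hand-written entries \eqref{e30}--\eqref{e32} really are the Gram entries; the cancellation of the $B_{l,i}C_{l,j}$ cross terms and the special treatment of the two boundary columns $q_{-1},q_0$ (where the derivative equals $\mp$ the function) are where an arithmetic slip could occur. The one conceptual point not to skip is that $\la\cdot,\cdot\ra_{w^{(n)},1}$ is a genuine, positive definite inner product once one interprets each $\mathcal{H}_n q_j$ as carrying both its value data $q_j(x_l)$ and its derivative data $q_j'(x_l)$; this positivity is exactly what upgrades the positive semidefiniteness of the Gram matrix to the equivalence between its invertibility and the assumed linear independence, and the constraint $m+2\le n$ in \eqref{e20} is what leaves room for that independence to hold.
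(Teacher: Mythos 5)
Your proposal is correct and follows essentially the same route as the paper: identify $A_{m+2}$ as the Gram matrix of the vectors $\mathcal{H}_n q_j$ with respect to $\la\cdot,\cdot\ra_{w^{(n)},1}$ (the paper's equation \eqref{e35}) and conclude nonsingularity from the assumed linear independence. You simply make explicit two steps the paper leaves implicit — the entrywise verification via $q_j(x_l)=C_{l,j}+B_{l,j}$, $q_j'(x_l)=B_{l,j}-C_{l,j}$, and the positive-definiteness argument $\xi^{T}A_{m+2}\xi=\|\sum_j\xi_j\mathcal{H}_n q_j\|_{w^{(n)},1}^2$ — which is a faithful elaboration rather than a different proof.
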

\begin{proof}
Consider $q_j\in H^1[-1,1]$ defined in \eqref{e14}. Using the
inner product in $\R^{n}_1$, one gets \be\label{e35}
(A_{m+2})_{i,j}=\la
\mathcal{H}_{n}q_{i-2},\mathcal{H}_nq_{j-2}\ra_{w^{(n)},1},\ \
i,j=1,2,\hdots,m+2, \ee i.e., $A_{m+2}$ is a Gram matrix. We have assumed
that the vectors $\mathcal{H}_nq_j\in \R^{n}_1$,
$j=-1,0,1,\hdots,m$, are linearly independent. Therefore, the determinant
of the matrix $A_{m+2}$ is nonzero.
This implies linear algebraic system \eqref{e29} has a unique solution.\\
\thmref{thm0} is proved.
\end{proof}

It is possible to choose basis functions $\vp_j$ such that the
vectors $\mathcal{H}_{n}q_j$, $j=-1,0,1,\hdots,m,$ are linearly
independent. An example of such choice of the basis functions is given in
Section 3.

\begin{lem}\label{lem1}
Let $y_m:=c^{(m)}_{min}$ be the unique minimizer for problem
\eqref{e26}. Then \be G_n(y_m)\to 0 \text{ as }n\to \infty, \ee
where $G_n$ is defined in \eqref{e21} and $m$ is an integer
depending on $n$ such that \eqref{e20} holds.
\end{lem}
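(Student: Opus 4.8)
The plan is to show that the minimal value $G_n(y_m)$ is controlled by how well the approximate solution $h_m$ can reproduce the exact data in the relevant norms, and then to exploit the convergence already built into the construction of $h_m$. First I would observe that since $y_m$ is the \emph{minimizer}, for any particular choice of coefficient vector $c^{(m)}$ we have $G_n(y_m)\leq G_n(c^{(m)})$. The natural comparison vector is the one whose continuous part $g_m$ approximates the true $g$ uniformly (as guaranteed by \eqref{e6a}) and whose singular coefficients $c^{(m)}_{-1},c^{(m)}_0$ equal the exact values $a_{-1},a_0$ from \eqref{e5}. With this choice, $h_m$ is close to the true solution $h$, so $Rh_m$ is close to $Rh=f$.

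Next I would convert the discrete functional $G_n(c^{(m)})=\|\mathcal{H}_n(f-Rh_m)\|_{w^{(n)},1}^2$ into the continuous $H^1$-norm using \eqref{e24}, which says the discrete weighted sum differs from $\|f-Rh_m\|_1^2$ by $\delta_n\to 0$. Thus it suffices to bound $\|f-Rh_m\|_1^2=\|R(h-h_m)\|_1^2$. Because $R:\dot H^{-1}[-1,1]\to H^1[-1,1]$ is an isomorphism (cited from \cite{R486}), this is bounded by a constant times $\|h-h_m\|_{\dot H^{-1}}^2$. The explicit formulas \eqref{e8}--\eqref{e11} let me write $Rh_m$ and $(Rh_m)'$ directly in terms of $g_m$ and the singular coefficients; with the singular parts matched exactly, the difference $f-Rh_m$ and its derivative reduce to integral expressions in $g-g_m$ of the form $e^{-x}\int_{-1}^x e^y(g-g_m)\,dy+e^x\int_x^1 e^{-y}(g-g_m)\,dy$, whose sup-norm is bounded by $\max_{[-1,1]}|g-g_m|$ times a fixed constant. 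Hence $\|f-Rh_m\|_1\to 0$ as $m\to\infty$ by \eqref{e6a}.

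Combining these, for this comparison vector $c^{(m)}$ I would get
\be
G_n(y_m)\leq G_n(c^{(m)})\leq \|f-Rh_m\|_1^2+\delta_n,
\ee
where the first term tends to $0$ as $m=m(n)\to\infty$ and the second tends to $0$ as $n\to\infty$. Since \eqref{e20} forces $m(n)\to\infty$ together with $n\to\infty$, both terms vanish in the limit and $G_n(y_m)\to 0$.

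The step I expect to be the main obstacle is making the two limits interact correctly: $\delta_n$ depends on $n$ directly, while the approximation error depends on $m=m(n)$, and in principle the quadrature error $\delta_n$ could depend on $m$ as well (through the smoothness of $J_{1,m},J_{2,m}$). I would therefore need to verify that the quantity $D_J$ appearing in \remref{e25} stays bounded uniformly in $m$, or otherwise argue that the left-rectangle error $\delta_n$ is controlled independently of the chosen comparison vector — the cleanest route being to fix the comparison $h_m$ first (depending only on $f$ through \eqref{e5}--\eqref{e6b}), bound $\|f-Rh_m\|_1$ by the deterministic approximation estimate \eqref{e6a}, and only then apply \eqref{e24} to that fixed continuous function. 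Ensuring this ordering of quantifiers is the delicate point of the argument.
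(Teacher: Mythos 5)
Your proposal follows essentially the same route as the paper's proof: compare the minimizer against the vector $\tilde h_m$ whose singular coefficients are the exact $a_{-1},a_0$ and whose continuous part approximates $g$ uniformly, split $G_n$ via the triangle inequality into the quadrature error $\delta_n$ plus $\|f-R\tilde h_m\|_1^2$, and bound the latter by a constant times $\max_{[-1,1]}|g-\tilde g_m|^2$ using the explicit integral formulas (the isomorphism of $R$ that you invoke is superfluous here and is not used in the paper's proof of this lemma, only later in Theorem~\ref{thm1}). The uniformity-in-$m$ issue you flag at the end is genuine --- the paper's own proof glosses over it by simply asserting $\delta_n\to 0$ even though $\tilde h_{m(n)}$ changes with $n$ --- but your suggested fix of ``fixing the comparison $h_m$ first'' cannot work literally, since the comparison vector must lie in $\R^{m(n)+2}$; the correct repair is your other alternative, namely a quadrature-error bound uniform in $m$, which holds here because $W_{1,m}$, $W_{2,m}$ and their derivatives are all bounded by $C\max_{[-1,1]}|g-\tilde g_m|$, giving $\delta_n\le C/n$ independently of $m$.
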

\begin{proof}
Let $$h(x)=a_{-1}\dl(x+1)+a_0\dl(x-1)+g(x)$$ be the exact solution to
\eqref{e1}, $Rh=f$, where $g(x)\in C[-1,1]$, and define \be\label{e37}
\tilde{h}_m(x)=a_{-1}\dl(x+1)+a_0\dl(x-1)+\tilde{g}_m(x),\ee where
\be\label{e38} \tilde{g}_m(x):=\sum_{j=1}^{m} a_j\vp_j(x).\ee
Choose $\tilde{g}_m(x)$ so
that \be\label{e39} \max_{x\in[-1,1]}|g(x)-\tilde{g}_m(x)|\to
0\text{ as }m\to \infty. \ee

Then \be\label{e40} G_n(y_m)\leq
\|\mathcal{H}_{n}(f-R\tilde{h}_m)\|^2_{w^{(n)},1}, \ee because $y_m$
is the unique minimizer of $G_n$.\\
Let us prove that $\|\mathcal{H}_n(f-R\tilde{h}_m)\|^2_{w^{(n)},1}\to
0$ as $n\to \infty$. Let
$$W_{1,m}(x):=f(x)-R\tilde{h}_m(x), \qquad
W_{2,m}:=f'(x)-(R\tilde{h}_m)'(x).$$ Then \be\label{e41}
W_{1,m}(x)=e^{-x}\int_{-1}^xe^{y}(g(y)-\tilde{g}_m(y))dy+
e^{x}\int_{x}^1e^{-y}(g(y)-\tilde{g}_m(y))dy
\ee and
\be\begin{split}\label{e42}
W_{2,m}(x)&=e^x\int_{x}^1e^{-y}(g(y)-\tilde{g}_m(y))dy-
e^{-x}\int_{-1}^xe^{y}(g(y)-\tilde{g}_m(y))dy.
\end{split}\ee
Thus, the functions  $[W_{1,m}(x)]^2$ and $[W_{2,m}(x)]^2$ are
Riemann-integrable. Therefore, \be\label{e43}
\dl_n:=|\|f-R\tilde{h}_m\|_1^2-\|\mathcal{H}_{n}(f-R\tilde{h}_m)\|^2_{w^{(n)},1}|\to
0 \text{ as } n\to \infty. \ee  Formula \eqref{e43} and  the
triangle inequality yield \be\label{e44}
\|\mathcal{H}_{n}(f-R\tilde{h}_m)\|^2_{w^{(n)},1}\leq
\dl_n+\|f-R\tilde{h}_m\|_1^2.\ee Let us derive an estimate for
$\|f-R\tilde{h}_m\|_1^2$. From \eqref{e41} and \eqref{e42} we obtain
the estimates: \be\begin{split}\label{e44a} |W_{1,m}(x)|&\leq
\max_{y\in
[-1,1]}|g(y)-\tilde{g}_m(y)|\left(e^{-x}\int_{-1}^xe^{y}dy+
e^{x}\int_{x}^1e^{-y}dy\right)\\
&=\max_{y\in [-1,1]}|g(y)-\tilde{g}_m(y)|\left[e^{-x}(e^x-e^{-1})+
e^{x}(e^{-x}-e^{-1})\right]\\
&=\max_{y\in [-1,1]}|g(y)-\tilde{g}_m(y)|\left[(2-e^{-1-x}
-e^{-1+x})\right]\leq \dl_{m,1}
\end{split}\ee
and \be\begin{split}\label{e44b} |W_{2,m}(x)|&\leq \max_{y\in
[-1,1]}|g(y)-\tilde{g}_m(y)|\left(e^x\int_{x}^1e^{-y}dy+
e^{-x}\int_{-1}^xe^{y}dy\right)\leq \dl_{m,1}, \end{split}\ee where
\be\label{e46} \dl_{m,1}:=2\max_{y\in[-1,1]}|g(y)-\tilde{g}_m(y)|.
\ee Therefore, it follows from \eqref{e44a} and \eqref{e44b} that
\be\begin{split}\label{e45} \|f-R\tilde{h}_m\|_1^2&=
\int_{-1}^1|W_{1,m}(x)|^2dx+\int_{-1}^1|W_{2,m}(x)|^2dx\\
&\leq 4\dl_{m,1}^2,
\end{split}\ee
where $\dl_{m,1}$ is defined in \eqref{e46}.  Using relation
\eqref{e39}, we obtain $\lim_{m\to \infty} \dl_{m,1}=0$. Since
$m=m(n)$ and $\lim_{n\to\infty}m(n)=\infty$, it follows from
\eqref{e44} and \eqref{e45} that
$\|\mathcal{H}_n(f-R\tilde{h}_m)\|^2_{w,1}\to 0$ as $n\to \infty.$
This together with \eqref{e40} imply $G_n(y_m)\to 0$ as $n\to
\infty$.\\
 \lemref{lem1} is proved.
\end{proof}

\begin{thm}\label{thm1}
Let the vector $c^{(m)}_{min}:=\left(
                      \begin{array}{c}
                        c_{-1}^{(m)} \\
                        c_{0}^{(m)} \\
                        c_{1}^{(m)} \\
                        \vdots \\
                        c_{m}^{(m)} \\
                      \end{array}
                    \right)\in \R^{m+2}
$ solve linear algebraic system \eqref{e29} and
$$h_m(x)=c_{-1}^{(m)}\dl(x+1)+c_{0}^{(m)}\dl(x-1)+\sum_{j=1}^{m}
c_{j}^{(m)}\vp_j(x).$$ Then \be\label{e48} \|h-h_m\|_{H^{-1}}\to
0\text{ as }n\to \infty. \ee
\end{thm}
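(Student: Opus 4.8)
The plan is to leverage the isomorphism property of $R$ stated in the introduction (established in \cite{R486}), namely that $R\colon \dot H^{-1}[-1,1]\to H^1[-1,1]$ is boundedly invertible, in order to transfer convergence of the residual $f-Rh_m$ in $H^1$ to convergence of $h-h_m$ in $\dot H^{-1}$. First I would note that both $h$ and $h_m$ belong to $\dot H^{-1}[-1,1]$: the delta terms are supported at the endpoints $\pm1\in\ol D$ and lie in $H^{-1}$ (point evaluation is bounded on $H^1[-1,1]$ by the Sobolev embedding), while $g$ and $g_m$ are continuous, hence in $L^2\subset \dot H^{-1}$. Consequently $h-h_m\in\dot H^{-1}[-1,1]$ and $R(h-h_m)=f-Rh_m$, so with $C:=\|R^{-1}\|$,
\be
\|h-h_m\|_{H^{-1}}=\|R^{-1}(f-Rh_m)\|_{H^{-1}}\le C\,\|f-Rh_m\|_1.
\ee
This reduces \eqref{e48} to proving $\|f-Rh_m\|_1\to 0$ as $n\to\infty$.

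Next I would pass from the continuous $H^1$-norm of the residual to the discrete functional $G_n$ that the coefficients actually minimize. Writing $\dl_n$ as in \eqref{e24} for the minimizer $h_m=h_{m(n)}$ and using $G_n(y_m)=\|\mathcal{H}_n(f-Rh_m)\|_{w^{(n)},1}^2$, I obtain
\be
\|f-Rh_m\|_1^2\le \dl_n+\|\mathcal{H}_n(f-Rh_m)\|_{w^{(n)},1}^2=\dl_n+G_n(y_m).
\ee
By \lemref{lem1} one has $G_n(y_m)\to0$, and by \eqref{e24} one has $\dl_n\to0$; hence $\|f-Rh_m\|_1^2\le \dl_n+G_n(y_m)\to0$. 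Substituting this into the bound of the first paragraph yields \eqref{e48}, and the theorem follows.

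The delicate point — and the step I expect to be the main obstacle — is the justification that $\dl_n\to0$ along the minimizer sequence $h_m$. Unlike the situation in \lemref{lem1}, where a fixed comparison element $\tilde h_m$ was used, here the integrand $J_{1,m}+J_{2,m}=[f-Rh_m]^2+[(f-Rh_m)']^2$ depends on $n$ both through $m=m(n)$ and through the minimizing coefficients $c^{(m)}_{min}$, so a quadrature estimate for a single fixed function does not suffice: one needs the Riemann-sum error to decay uniformly along the sequence. This is exactly where the hypothesis of the Remark following \eqref{e24} enters — if the boundary quantity $D_J$ stays bounded for all $m$, equivalently if $\|f-Rh_m\|_{C^2}$ is controlled uniformly in $m$, then $\dl_n=O\!\left(\frac{1}{n^2}\right)$ uniformly and the argument closes. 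Establishing this uniform control is the heart of the matter; the isomorphism reduction and the triangle-inequality step are routine once it is in hand.
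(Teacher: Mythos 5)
Your proposal is correct and follows essentially the same route as the paper's proof: the isomorphism bound $\|h-h_m\|_{H^{-1}}\le \|R^{-1}\|_{H^1\to \dot H^{-1}}\|f-Rh_m\|_1$, the triangle-inequality splitting of $\|f-Rh_m\|_1^2$ into $G_n(c^{(m)}_{min})+\dl_n$, and \lemref{lem1} for $G_n(c^{(m)}_{min})\to 0$. The ``delicate point'' you flag --- that $\dl_n\to 0$ along the minimizer sequence requires quadrature-error control that is uniform in $n$, since the integrand changes with $m=m(n)$ and with the minimizing coefficients --- is not resolved in the paper either: its proof of \thmref{thm1} simply cites \eqref{e24}, which the paper asserts as part of its setup (with the bounded-$D_J$ hypothesis of the Remark offered only as a sufficient condition), so your observation points to a tacit assumption of the paper rather than a defect peculiar to your own argument.
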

\begin{proof}
We have \be\begin{split}\label{e49}
\|h-h_m\|_{H^{-1}[-1,1]}^2&=\|R^{-1}(f-Rh_m)\|_{H^{-1}[-1,1]}^2\\
&\leq
\|R^{-1}\|_{H^1[-1,1]\to \dot{H}^{-1}[-1,1]}^2\|f-Rh_m\|_1^2\\
&\leq C\left(G_n(c^{(m)}_{min})+\left|\|f-Rh_m\|_1^2-G_n(c^{(m)}_{min})\right|\right)\\
&\leq C[G_n(c^{(m)}_{min})+\dl_n]\to 0\text{ as } n\to \infty,
\end{split}
\ee where $C>0$ is a constant and \lemref{lem1} was used.\\
\thmref{thm1} is proved.
\end{proof}

\section{The choice of collocation points and basis functions}
In this section we give an example of the collocation points $x_i$,
$i=1,2,\hdots,n$, and basis functions $\vp_j$, $j=1,2,\hdots,m$,
such that the vectors $\mathcal{H}_nq_j$, $j=-1,0,1,\hdots,m,$ are
linearly independent, where $m+2\leq n$, \be\label{vHq10}
\mathcal{H}_nq_{-1}=\left(
                      \begin{array}{c}
                        e^{-1-x_1} \\
                        e^{-1-x_2} \\
                        \vdots \\
                        e^{-1-x_n} \\
                      \end{array}
                    \right),\quad \mathcal{H}_nq_{0}=\left(
                      \begin{array}{c}
                        e^{-1+x_1} \\
                        e^{-1+x_2} \\
                        \vdots \\
                        e^{-1+x_n} \\
                      \end{array}
                    \right)
  \ee and
\be\label{vHqj} \mathcal{H}_nq_{j}=\left(
                      \begin{array}{c}
                        e^{-x_1}\int_{-1}^1e^y\vp_j(y)dy+e^{x_1}\int_{-1}^1e^{-y}\vp_j(y)dy \\
                        e^{-x_2}\int_{-1}^1e^y\vp_j(y)dy+e^{x_2}\int_{-1}^1e^{-y}\vp_j(y)dy \\
                        \vdots \\
                        e^{-x_n}\int_{-1}^1e^y\vp_j(y)dy+e^{x_n}\int_{-1}^1e^{-y}\vp_j(y)dy \\
                      \end{array}
                    \right),\ j=1,2,\hdots,m.
\ee

Let us choose $w_j^{(n)}$ and $x_j$ as in \eqref{e22} and
\eqref{e23}, respectively, with an even number $n\geq 6$.
\begin{figure}[htp]
\begin{center}
  \includegraphics[width=10cm]{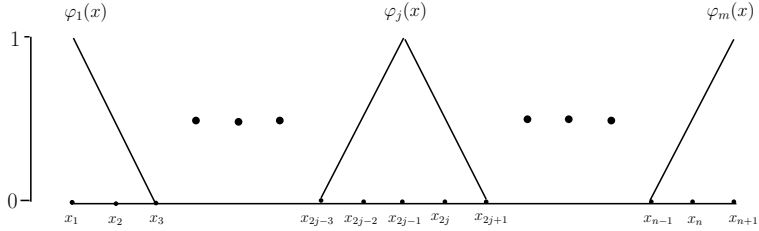}\\
  \caption{The structure of the basis functions $\vp_j$ }
\end{center}
\end{figure} As the basis functions in $C[-1,1]$ we choose the following linear
B-splines: \be\begin{split}\label{e50}
\vp_1(x)&=\left\{\begin{array}{ll} \psi_{2}(x) & \hbox{$x_{1}\leq
x\leq x_{3}$,}\\ 0, & \hbox{otherwise,}
                                                            \end{array}
                                                          \right.\\
\vp_j(x)&=\left\{
                                                            \begin{array}{ll}
                                                              \psi_{1}(x-(j-1)2s), & \hbox{$x_{2j-3}\leq x\leq x_{2j-1}$,} \\
                                                              \psi_{2}(x-(j-1)2s), & \hbox{$x_{2j-1}\leq x\leq x_{2j+1}$,}\\
                                                              0, & \hbox{otherwise,}
                                                            \end{array}
                                                          \right.\\
&j=2,\hdots,m-1,\\
\vp_{m}(x)&=\left\{\begin{array}{ll}
                                                              \psi_{1}(x-(m-1)2s), & \hbox{$x_{n-1}\leq x\leq 1$,} \\
                                                              0, & \hbox{otherwise,}
                                                            \end{array}
                                                         \right.\end{split}\ee

where $$m=\frac{n}{2}+1, \qquad s:=\frac{2}{n},$$ and
\be\begin{split}\label{e51}
\psi_{1}(x)&:=\frac{x-x_1+2s}{2s},\\
\psi_{2}(x)&:=\frac{-(x-x_1-2s)}{2s}.\end{split}\ee Here we have
chosen $x_{2j-1}$, $j=1,2,\hdots,\frac{n}{2}+1$, as the knots of the
linear B-splines. From Figure 1 we can see that at each
$j=2,\hdots,m-1,$ $\vp_j(x)$ is a "hat" function. The advantage of
using these basis functions is the following: at most two basis
functions are needed for computing the solution $h_m(x)$, because
\be\label{e52} h_m(x)=\left\{
                                                   \begin{array}{ll}
                                                     c_l^{(m)}, & \hbox{$x=x_{2l-1}$,} \\
                                                    c_l^{(m)}\vp_l(x)+c^{(m)}_{l+1}\vp_{l+1}(x), & \hbox{$x_{2l-1}<x<x_{2l+1}$,}
                                                   \end{array}
                                                 \right.\ l=2,\hdots,\frac{n}{2}.
 \ee

From the structure of the basis functions $\vp_j$ we have
\be\label{svp}\begin{split} \vp_1(x)&=0,\ x_3\leq x\leq 1,\\
\vp_j(x)&=0,\ -1\leq x\leq x_{2j-3} \text{ and } x_{2j+1}\leq x\leq 1,\ j=2,3,\hdots,m-1,\\
\vp_m(x)&=0,\ -1\leq x\leq x_{n-1}.
\end{split}\ee
Let $(\mathcal{H}_nq_j)_i$ be the $i$-th element of the vector
$\mathcal{H}_nq_j$, $j=-1,0,1,\hdots,m$. Then \be\label{Hqm1}
(\mathcal{H}_nq_{-1})_i=e^{-(1+x_i)},\ i=1,2,\hdots,n, \ee
\be\label{Hq0} (\mathcal{H}_nq_0)_i=e^{-(1-x_i)},\ i=1,2,\hdots,n.
\ee Using \eqref{svp} in \eqref{vHqj}, we obtain
\be\label{Hqj}\begin{split} (\mathcal{H}_nq_1)_{i}&=\left\{
                                                     \begin{array}{ll}
                                                       \frac{2s-1 + e^{-2s}}{2s}, & \hbox{$i=1$,} \\
                                                       1-e^{-s}, & \hbox{$i=2$,}\\
                                                       e^{1-(i-1)s}C_1, & \hbox{$3\leq i\leq n$,}
                                                     \end{array}
                                                   \right. \\
 (\mathcal{H}_nq_j)_{i}&=e^{-1+(i-1)s}D_j, \ 1\leq
i\leq 2j-3,\ j=2,3,\hdots,m-1, \\
(\mathcal{H}_nq_j)_{2j-2}&=\frac{e^{-3 s} - e^{-s} + 2 s}{2 s},\
j=2,3,\hdots,m-1, \\
(\mathcal{H}_nq_j)_{2j-1}&=\frac{-1 + e^{-2 s} + 2 s}{s},\
j=2,3,\hdots,m-1, \\
(\mathcal{H}_nq_j)_{2j}&=(\mathcal{H}_nq_j)_{2j-2},\
j=2,3,\hdots,m-1,\\
(\mathcal{H}_nq_i)_{j}&= e^{1-(i-1)s}C_j,\ 2j+1\leq i\leq n,\
j=2,3,\hdots,m-1, \\
(\mathcal{H}_nq_m)_i&=\left\{
                                                     \begin{array}{ll}
                                                       e^{-1+(i-1)s}D_m, & \hbox{$1\leq i\leq n-1$,} \\
                                                       1-e^{-s}, & \hbox{$i=n$,}
                                                     \end{array}
                                                   \right.\end{split}\ee where

\be\begin{split}\label{Cj}
C_1&:=\int_{-1}^{x_3}e^{y}\vp_1(y)dy=\frac{-1
+ e^{2 s} - 2 s}{2 es},\\
C_j&:=\int_{x_{2j-3}}^{x_{2j+1}}e^y\vp_j(y)dy=\frac{e^{-1 + 2 (-2 +
j) s} (-1 + e^{2 s})^2}{2 s},\quad j=2,3,\hdots, m-1,
\end{split}\ee
\be\begin{split}\label{Dj}
D_1&:=\int_{-1}^{x_3}e^{-y}\vp_1(y)dy=\frac{e (2s-1 + e^{-2 s})}{2 s},\\
D_j&:=\int_{x_{2j-3}}^{x_{2j+1}}e^{-y}\vp_j(y)dy=\frac{e^{1 - 2 j s}
(-1 + e^{2 s})^2}{2 s},\quad
j=2,3,\hdots, m-1,\\
D_m&:=\int_{x_{n-1}}^1e^{-y}\vp_m(y)dy=\frac{-1 + e^{2 s} - 2 s}{2
es}.\end{split}\ee

\begin{thm}\label{thm13}
Consider $q_j$ defined in \eqref{e14} with $\vp_j$ defined in
\eqref{e50}. Let the collocation points $x_j$, $j=1,2,\hdots,n$, be
defined in \eqref{e23} with an even number $n\geq 6$. Then the
vectors $\mathcal{H}_{n}q_j$, $j=-1,0,1,2,\hdots,m$, $m=\frac{1}{s}+1$, $s=\frac{2}{n}$, are linearly independent, where $\mathcal{H}_n$ is
defined in \eqref{e15}.
\end{thm}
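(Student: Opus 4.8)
The plan is to show that the Gram matrix $A_{m+2}$ from \eqref{e35} is positive definite, which by \thmref{thm0} is precisely the required linear independence. First I would reduce the statement to a property of a single function. For real coefficients $\gamma_{-1},\gamma_0,\dots,\gamma_m$, set $F:=\sum_{j=-1}^m\gamma_jq_j$. Since $w_j^{(n)}>0$,
\be\label{planform}
\Big\|\sum_{j=-1}^m\gamma_j\mathcal{H}_nq_j\Big\|^2_{w^{(n)},1}=\sum_{i=1}^n w_i^{(n)}\big(F(x_i)^2+F'(x_i)^2\big),
\ee
so $\sum_j\gamma_j\mathcal{H}_nq_j=0$ in $(\R^n_1,\la\cdot,\cdot\ra_{w^{(n)},1})$ holds exactly when $F(x_i)=F'(x_i)=0$ for all $i=1,\dots,n$. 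Thus the vectors are linearly independent if and only if these $2n$ vanishing conditions force $\gamma=0$, and this is the statement I would prove.

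Next I would exploit the differential equation satisfied by $F$. Because $e^{-|x-y|}$ is twice the Green function of the operator $u\mapsto-u''+u$ on $\R$, the functions $q_{-1}(x)=e^{-(x+1)}$ and $q_0(x)=e^{-(1-x)}$ solve $u''-u=0$, while each $q_j$, $j\ge1$, solves $q_j''-q_j=-2\vp_j$. Hence $F$ is a $C^1$ function on $\R$ obeying $F''-F=-2\Phi$, where $\Phi:=\sum_{j=1}^m\gamma_j\vp_j$ is continuous, piecewise linear, supported in $[-1,1]$, with corners only at the knots $x_{2k-1}$. The structural facts I would record are that $\Phi$ (hence $F''=F-2\Phi$) is continuous across every knot, and that at a knot $x_{2k-1}$ only $\vp_k$ is nonzero, with $\vp_k(x_{2k-1})=1$, so that $\Phi(x_{2k-1})=\gamma_k$.

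The heart of the argument is a left-to-right propagation. On $(-\infty,-1]$ one has $\Phi\equiv0$, so $F=\alpha e^x+\beta e^{-x}$, and $F(-1)=F'(-1)=0$ force $\alpha=\beta=0$; thus $F\equiv0$ on $(-\infty,-1]$, and reading the coefficient of $e^{-x}$ gives $\gamma_{-1}=0$. I would then prove by induction on $k$ that $F\equiv0$ on $(-\infty,x_{2k-1}]$. Given this for some $k$, continuity of $\Phi$ at $x_{2k-1}$ together with $\Phi\equiv0$ on the left yields $\gamma_k=0$; consequently on $[x_{2k-1},x_{2k+1}]$ one has $\Phi=\gamma_{k+1}\vp_{k+1}$ with $\vp_{k+1}$ linear and ascending from $0$ to $1$. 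Solving the initial value problem $F''-F=-2\Phi$ with zero Cauchy data at $x_{2k-1}$ gives, with $u:=x-x_{2k-1}$,
\be\label{plansolve}
F(x)=\frac{\gamma_{k+1}}{s}\big(u-\sinh u\big),
\ee
so the collocation condition $F(x_{2k})=\frac{\gamma_{k+1}}{s}(s-\sinh s)=0$ forces $\gamma_{k+1}=0$ because $\sinh s\neq s$ for $s>0$; then $F\equiv0$ on $[x_{2k-1},x_{2k+1}]$, completing the step. Running $k=1,\dots,n/2$ (the final step using the endpoint knot $x_{n+1}=1$ with interior node $x_n$ and the ascending half-hat $\vp_m$) annihilates $\gamma_1,\dots,\gamma_m$. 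With all $\gamma_j=0$ for $j\ge1$, the vanishing of the coefficient of $e^x$ in $F|_{(-\infty,-1]}$ gives $\gamma_0=0$, so $\gamma=0$ and the proof is complete.

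I expect the main obstacle to be the bookkeeping that makes the induction uniform across the three types of B-splines: the descending half-hat $\vp_1$, the interior hats, and the ascending half-hat $\vp_m$. In particular one must check that at the very first knot the extension to $x<-1$ is legitimate (so that $\Phi(-1^-)=0$ and hence $\gamma_1=0$), and that at the last step the midpoint node $x_n=x_{n-1}+s$ lies strictly inside $[x_{n-1},1]$, so that $F(x_n)$ supplies the needed equation. The local computation \eqref{plansolve} and the inequality $\sinh s\neq s$ are elementary; the care is entirely in verifying that exactly one new hat becomes active on each interval $[x_{2k-1},x_{2k+1}]$ and that its midpoint collocation point is always among the $x_i$.
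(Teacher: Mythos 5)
Your route is genuinely different from the paper's: the paper assumes a hypothetical dependence relation among the explicit vectors \eqref{Hqm1}--\eqref{Hqj}, solves for the coefficients, and reaches sign contradictions such as $\sinh(2s)-2s>0$; you instead convert the question, through the Gram form, into a Cauchy-propagation problem for $F:=\sum_{j=-1}^m\gamma_jq_j$, using $q_j''-q_j=-2\vp_j$. That reduction, the base step on $(-\infty,-1]$, the local solution $F=\frac{\gamma_{k+1}}{s}(u-\sinh u)$, and the conclusion from $\sinh s\neq s$ are all correct. However, there is a genuine gap at the first knot, precisely at the point you flagged and then dismissed. By \eqref{e50}, $\vp_1$ is the descending half-hat with $\vp_1(-1)=1$, so its extension by zero to $x<-1$ is \emph{discontinuous} at $x=-1$; consequently $\Phi=\sum_{j\geq 1}\gamma_j\vp_j$ is continuous at $-1$ only if $\gamma_1=0$. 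Your inference ``$\Phi(-1^-)=0$, hence $\gamma_1=0$'' presupposes exactly that continuity, so it is circular. The defect is not cosmetic: on $[x_1,x_3]$ one has $\Phi=\gamma_1\vp_1+\gamma_2\vp_2$, with two unknown coefficients (a descending and an ascending ramp), and the data your induction step actually uses there (zero Cauchy data at $x_1$ plus the single value $F(x_2)=0$) admit a one-parameter family of nonzero pairs $(\gamma_1,\gamma_2)$; so the first step fails as stated.

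The repair stays inside your framework but requires the derivative condition $F'(x_2)=0$, which your reduction makes available and your induction never invokes. On $[x_1,x_3]$, with $u:=x+1$, one has $\Phi=\gamma_1+\frac{\gamma_2-\gamma_1}{2s}\,u$, and zero Cauchy data at $u=0$ give $F(u)=2\gamma_1(1-\cosh u)+\dl_*(u-\sinh u)$, where $\dl_*:=\frac{\gamma_2-\gamma_1}{s}$. The two conditions $F(s)=F'(s)=0$ form a $2\times 2$ linear system in $(\gamma_1,\dl_*)$ with determinant
\bee
2\left[(1-\cosh s)^2+\sinh s\,(s-\sinh s)\right]=2\left(s\sinh s-2\cosh s+2\right)>0\quad\text{for } s>0,
\eee
since $g(s):=s\sinh s-2\cosh s+2$ satisfies $g(0)=g'(0)=0$ and $g''(s)=s\sinh s>0$. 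Hence $\gamma_1=\gamma_2=0$ and $F\equiv0$ on $(-\infty,x_3]$. From there your steps for $k\geq2$ are sound, because the interior hats genuinely are continuous at the interior knots, so $\gamma_k=\Phi(x_{2k-1})=0$ is legitimate, and $F(x_{2k})=\frac{\gamma_{k+1}}{s}(s-\sinh s)=0$ then kills $\gamma_{k+1}$; the endpoint bookkeeping for $\gamma_{-1}$, $\gamma_0$ and $\vp_m$ is as you describe. With this one correction your proof is complete, and it is substantially shorter and more conceptual than the paper's computation.
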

\begin{proof} Let
\be\begin{split} V_0&:=\{\mathcal{H}_{n}q_{-1},\ \mathcal{H}_{n}q_0\},\\
V_j&:=V_{j-1}\cup\{\mathcal{H}_{n}q_j\},\quad
j=1,2,\hdots,m.\end{split}\ee
We prove that the elements of the sets $V_j$, $j=0,1,\hdots,m$, are
linearly independent.

The elements of the set $V_0$ are linearly independent. Indeed,
$\mathcal{H}_nq_{j}\neq 0$ $\forall j$, and assuming that there
exists a constant $\alpha$ such that \be\label{L0}
\mathcal{H}_nq_{-1}=\alpha\mathcal{H}_nq_0,\ee one gets a
contradiction: consider the first and the $n$-th equations of
\eqref{L0}, i.e., \be\label{L01}
(\mathcal{H}_nq_{-1})_1=\alpha(\mathcal{H}_nq_0)_1 \ee and
\be\label{L0n} (\mathcal{H}_nq_{-1})_n=\alpha(\mathcal{H}_nq_0)_n,
\ee respectively.
 It
follows from \eqref{Hqm1}, \eqref{Hq0} and \eqref{L01} that
\be\label{al20}\alpha=e^2.\ee From \eqref{L0n}, \eqref{Hqm1},
\eqref{Hq0} and \eqref{al20} it follows that \be e^{-2+s}=e^{2-s}.\ee  This is a contradiction, which proves that $\mathcal{H}_nq_{-1}$
and $\mathcal{H}_nq_0$ are linearly independent.

Let us prove that the element of the set $V_j$ are linearly
independent, $j=1,2,3,\hdots,m-2$. Assume that there exist constants
$\alpha_k$, $k=1,2,\hdots,j+1$, such that \be\label{Lj}
\mathcal{H}_nq_j=\sum_{k=-1}^{j-1}\alpha_{k+2}\mathcal{H}_nq_k. \ee
Using relations \eqref{Hqm1}-\eqref{Hqj} one can write the $(2j-1)$-th
equation of linear system \eqref{Lj} as follows:
\be\label{Lj1}\begin{split}
(\mathcal{H}_nq_j)_{2j-1}&=\sum_{k=-1}^{j-1}\alpha_{k+2}\mathcal({H}_nq_k)_{2j-1}\\
&=\alpha_1e^{-(2j-2)s}+\alpha_2e^{-2+(2j-2)s}+e^{1-(2j-2)s}\sum_{k=1}^{j-1}\alpha_{k+2}C_{k}.
\end{split}\ee Similarly, by relations \eqref{Hqm1}-\eqref{Hqj} the $(n-1)$-th and $n$-th equations of linear
system \eqref{Lj} can be written in the following expressions:
\be\label{Lj2}\begin{split}
(\mathcal{H}_nq_j)_{n-1}&=e^{-1+2s}C_j=\alpha_1e^{-2+2s}+\alpha_2e^{-2s}+e^{-1+2s}\sum_{k=1}^{j-1}\alpha_{k+2}C_k
\end{split}\ee and \be\label{Lj3}
(\mathcal{H}_nq_j)_{n}=e^{-1+s}C_j=\alpha_1e^{-2+s}+\alpha_2e^{-s}+e^{-1+s}\sum_{k=1}^{j-1}\alpha_{k+2}C_k,\ee
respectively. Multiply \eqref{Lj3} by $e^s$ and compare with
\eqref{Lj2} to conclude that $\alpha_2=0$. From \eqref{Lj3} with
$\alpha_2=0$ one obtains \be\begin{split}\label{al1j}
\alpha_1=eC_j-e\sum_{k=1}^{j-1}\alpha_{k+2}C_k.
\end{split}\ee
Substitute $\alpha_1$ from \eqref{al1j} and
$\alpha_2=0$ into \eqref{Lj1} and get \be\label{I2Cj}
(\mathcal{H}_nq_j)_{2j-1}=e^{1-(2j-2)s}C_j. \ee
From \eqref{Cj} and
\eqref{Hqj} one  obtains for $0<s<1 ,\ j=1,2,3,\hdots,m-2$, the following
relation
\be\begin{split}\label{CjI2}
e^{1-(2j-2)s}C_j-(\mathcal{H}_nq_j)_{2j-1}&=\frac{e^{-2s} (-1 + e^{2
s})^2}{2 s}-\frac{-1 + e^{-2 s} + 2
s}{s}\\
&=\frac{e^{2s}-e^{-2s}-4s }{2s}=\frac{\sinh(2s)-2s}{s}>0,\end{split}
\ee which contradicts relation \eqref{I2Cj}. This contradiction
proves that the elements of the set $V_j$ are linearly independent,
$j=1,2,3,\hdots,m-2$, for $0<s<1$.

Let us prove that the elements of the set $V_{m-1}$, are linearly
independent. Assume that there exist constants $\alpha_{k}$,
$k=1,2,\hdots,m$, such that \be\label{Ln3}
\mathcal{H}_nq_{m-1}=\sum_{k=-1}^{m-2}\alpha_{k+2}\mathcal{H}_nq_k.
\ee  Using \eqref{Hqm1}-\eqref{Hqj}, the $(n-3)$-th equation of
\eqref{Ln3} can be written as follows: \be\label{hn31}\begin{split}
(\mathcal{H}_nq_{m-1})_{n-3}=&e^{1-4s}D_{m-1}=\sum_{k=-1}^{m-3}\alpha_{k+2}(\mathcal{H}_nq_k)_{n-3}\\
=&\alpha_1e^{-2+4s}+\alpha_2e^{-4s}+e^{-1+4s}\sum_{k=1}^{m-3}\alpha_{k+2}C_k+\alpha_m(\mathcal{H}_nq_{m-2})_{n-3}.
\end{split}\ee
Similarly we obtain the
$(n-2)$-th, $(n-1)$-th and $n$-th equations, corresponding to vector
equation \eqref{Ln3}:
\be\label{hn32}\begin{split}
(\mathcal{H}_nq_{m-1})_{n-2}&=\alpha_1e^{-2+3s}+\alpha_2e^{-3s}+e^{-1+3s}\sum_{k=1}^{m-3}\alpha_{k+2}C_k+\alpha_m(\mathcal{H}_nq_{m-2})_{n-2},
\end{split}\ee
\be\label{hn33}\begin{split}
(\mathcal{H}_nq_{m-1})_{n-1}&=\alpha_1e^{-2+2s}+\alpha_2e^{-2s}+e^{-1+2s}\sum_{k=1}^{m-2}\alpha_{k+2}C_k
\end{split}\ee and \be\label{hn34}\begin{split}
(\mathcal{H}_nq_{m-1})_{n}&=\alpha_1e^{-2+s}+\alpha_2e^{-s}+e^{-1+s}\sum_{k=1}^{m-2}\alpha_{k+2}C_k,\end{split}\ee
respectively. Multiply \eqref{hn34} by $e^s$ and compare with
\eqref{hn33} to get \be\label{al13}\begin{split}
\alpha_2&=\frac{(\mathcal{H}_nq_{m-1})_{n-1}-e^{s}(\mathcal{H}_nq_{m-1})_{n}}{e^{-2s}-1}=\frac{1-e^{2s}+4se^{2s}-2se^{3s}}{2s(1-e^{2s})},
\end{split}\ee
where formula \eqref{Hqj} was used. Multiplying \eqref{hn34} by $e^{3s}$,
comparing with equation \eqref{hn31}, and using \eqref{al13}, we obtain
\be\label{alma}\begin{split}
\alpha_m&=\frac{e^{1-4s}D_{m-1}-e^{3s}(\mathcal{H}_nq_{m-1})_{n}-\alpha_2(e^{-4s}-e^{2s})}{(\mathcal{H}_nq_{m-2})_{n-3}-e^{-1+4s}C_{m-2}}\\
&=\frac{2 + 4 s - 2 e^s s + 4 e^{2 s} s - 2 e^{3 s} s +e^{4 s} (-2
+4s)}{-1 + e^{4 s} - 4 e^{2 s} s}.\end{split}\ee
Another expression for $\alpha_m$ is obtained by multiplying
\eqref{hn34} by $e^{2s}$ and comparing with \eqref{hn32}:
\be\begin{split}\label{almb}
\alpha_m&=\frac{(\mathcal{H}_nq_{m-1})_{n-2}-e^{2s}(\mathcal{H}_nq_{m-1})_{n}-\alpha_2(e^{-3s}-e^{s})}{(\mathcal{H}_nq_{m-2})_{n-2}-e^{-1+3s}C_{m-2}}\\
&=\frac{2 + 4 s - 4 e^s s + e^{2 s} (-2 + 4 s)}{-1 + e^{2 s} - 2 e^s
s},
\end{split}\ee where $\alpha_2$ is given in \eqref{al13}.

In deriving formulas  \eqref{alma} and \eqref{almb} we have used
the relation $m=\frac{1}{s}+1$ and equation
\eqref{Hqj}. Let us prove that equations \eqref{alma} and
\eqref{almb} lead to a contradiction. Define\be\begin{split} r_1&:=2
+ 4 s - 2 e^s s + 4 e^{2 s} s - 2 e^{3 s} s +e^{4 s}
(-2 + 4s),\\
 r_2&:=-1 + e^{4 s} - 4 e^{2 s} s,\\
r_3&:=2 + 4 s - 4 e^s s + e^{2 s} (-2 + 4 s),\\
r_4&:=-1 + e^{2 s} - 2 e^s s.\end{split}\ee Then from \eqref{alma}
and \eqref{almb} we get \be\label{r3r1} r_3r_2-r_1r_4=0.\ee We have
\be\label{r1r3}\begin{split} r_3r_2-r_1r_4&=2 e^s (-1 + e^s)^2 s (3
+ 4 s +(4 s-3)e^{2 s}- 2 se^s)>0\quad \text{for }
s\in(0,1).\end{split} \ee The sign of the right side of equality
\eqref{r1r3} is the same as the sign of
$3+4s+(4s-3)e^{2s}-2se^s:=\beta(s).$ Let us check that $\beta(s)>0$
for $s\in (0,1).$ One has $\beta(0)=0,$ $\beta'(0)=0,$
$\beta'(s)=4-2e^{2s}+8se^{2s}-2e^s-2se^s,$
$\beta''=4e^{2s}+16se^{2s}-4e^s-2se^s>0.$ If $\beta''(s)>0$ for
$s\in (0,1)$ and $\beta(0)=0,$ $\beta'(0)=0$, then $\beta(s)>0$ for
$s\in (0,1).$ Inequality \eqref{r1r3} contradicts relation
\eqref{r3r1} which proves that $\mathcal{H}_nq_j$,
$j=-1,0,1,2,\hdots, m-1$, are linearly independent.

Similarly, to prove that $\mathcal{H}_nq_j$, $j=-1,0,1,2,\hdots, m$,
are linearly independent, we assume  that there exist constants
$\alpha_{k}$, $k=1,2,\hdots,m+1$, such that \be\label{Lm}
\mathcal{H}_nq_{m}=\sum_{k=-1}^{m-1}\alpha_{k+2}\mathcal{H}_nq_k.
\ee
Using formulas \eqref{Hqm1}-\eqref{Hqj}, one  can write the $(n-5)$-th
equation: \be\label{hm21}\begin{split}
(\mathcal{H}_nq_m)_{n-5}&=e^{1-6s}D_{m}=\sum_{k=-1}^{m-1}\alpha_{k+2}(\mathcal{H}_nq_k)_{n-5}\\
&=\alpha_1e^{-2+6s}+\alpha_2e^{-6s}+e^{-1+6s}\sum_{j=1}^{m-4}\alpha_{j+2}C_j\\
&+\alpha_{m-1}(\mathcal{H}_nq_{m-3})_{n-5}+\alpha_me^{1-6s}D_{m-2}+\alpha_{m+1}e^{1-6s}D_{m-1},
\end{split}\ee

Similarly one obtains the  $(n-4)$-th,
$(n-3)$-th,  $(n-2)$-th,  $(n-1)$-th and $n$-th equations
corresponding to the vector equation  \eqref{Lm}:
\be\label{hm22}\begin{split}
(\mathcal{H}_nq_m)_{n-4}&=e^{1-5s}D_{m}=\alpha_1e^{-2+5s}+\alpha_2e^{-5s}+e^{-1+5s}\sum_{j=1}^{m-4}\alpha_{j+2}C_j\\
&+\alpha_{m-1}(\mathcal{H}_nq_{m-3})_{n-4}+\alpha_m(\mathcal{H}_nq_{m-2})_{n-4}+\alpha_{m+1}e^{1-5s}D_{m-1},
\end{split}\ee

\be\label{hm23}\begin{split}
(\mathcal{H}_nq_m)_{n-3}&=e^{1-4s}D_{m}=\alpha_1e^{-2+4s}+\alpha_2e^{-4s}+e^{-1+4s}\sum_{j=1}^{m-3}\alpha_{j+2}C_j\\
&+\alpha_m(\mathcal{H}_nq_{m-2})_{n-3}+\alpha_{m+1}e^{1-4s}D_{m-1},
\end{split}\ee
\be\label{hm24}\begin{split}
(\mathcal{H}_nq_m)_{n-2}&=e^{1-3s}D_{m}=\alpha_1e^{-2+3s}+\alpha_2e^{-3s}+e^{-1+3s}\sum_{j=1}^{m-3}\alpha_{j+2}C_j\\
&+\alpha_m(\mathcal{H}_nq_{m-2})_{n-2}+\alpha_{m+1}(\mathcal{H}_nq_{m-1})_{n-2},
\end{split}\ee \be\label{hm25}\begin{split}
(\mathcal{H}_nq_m)_{n-1}&=e^{1-2s}D_{m}=\alpha_1e^{-2+2s}+\alpha_2e^{-2s}+e^{-1+2s}\sum_{j=1}^{m-2}\alpha_{j+2}C_j\\
&+\alpha_{m+1}(\mathcal{H}_nq_{m-1})_{n-1}
\end{split}\ee and
\be\label{hm26}
(\mathcal{H}_nq_m)_{n}=\alpha_1e^{-2+s}+\alpha_2e^{-s}+e^{-1+s}\sum_{j=1}^{m-2}\alpha_{j+2}C_j+\alpha_{m+1}(\mathcal{H}_nq_{m-1})_{n},\ee
 respectively. Here we have used the
assumption $n\geq 6$. From \eqref{hm26} one gets \be\label{al2n2}
\alpha_1=(\mathcal{H}_nq_m)_{n}e^{2-s}-\alpha_2e^{2-2s}-e\sum_{k=1}^{m-2}\alpha_{k+2}C_k-\alpha_{m+1}(\mathcal{H}_nq_{m-1})_{n}e^{2-s}.\ee
If one substitutes \eqref{al2n2} into equations \eqref{hm25}, \eqref{hm24}
and \eqref{hm23}, then one obtains the following relations:
 \be\label{a21}
\alpha_2=p_1-p_2\alpha_{m+1},\ee \be\begin{split}\label{a22}
\alpha_2&=p_3-p_4\alpha_m-p_5\alpha_{m+1}
\end{split}\ee and
\be\begin{split}\label{a23}
\alpha_2&=p_6-p_7\alpha_m-p_8\alpha_{m+1} ,
\end{split}\ee respectively, where \be\begin{split}
p_1&:=\frac{e^{1-2s}D_{m}-(\mathcal{H}_nq_m)_{n}e^{s}}{e^{-2s}-1},\quad p_2:=\frac{(\mathcal{H}_nq_{m-1})_{n-1}-(\mathcal{H}_nq_{m-1})_{n}e^{s}}{e^{-2s}-1}, \\
p_3&:=\frac{e^{1-3s}D_m-(\mathcal{H}_nq_m)_{n}e^{2s}}{e^{-3s}-e^s},\quad p_4:=\frac{(\mathcal{H}_nq_{m-2})_{n-2}-e^{-1+3s}C_{m-2}}{e^{-3s}-e^s}, \\
p_5&:=\frac{(\mathcal{H}_nq_{m-1})_{n-2}-(\mathcal{H}_nq_{m-1})_{n}e^{2s}}{e^{-3s}-e^s},\quad p_6:=\frac{e^{1-4s}D_m-(\mathcal{H}_nq_m)_{n}e^{3s}}{e^{-4s}-e^{2s}}, \\
p_7&:=\frac{(\mathcal{H}_nq_{m-2})_{n-3}-e^{-1+4s}C_{m-2}}{e^{-4s}-e^{2s}},
\quad
p_8:=\frac{e^{1-4s}D_{m-1}-(\mathcal{H}_nq_{m-1})_{n}e^{3s}}{e^{-4s}-e^{2s}}.
\end{split}\ee

Another formula for  $\alpha_1$ one gets
from equation \eqref{hm23}:
\be\begin{split}\label{a11} \alpha_1&=
e^{3-8s}D_{m}-\alpha_2e^{2-8s}-e\sum_{j=1}^{m-3}\alpha_{j+2}C_j-\alpha_m(\mathcal{H}_nq_{m-2})_{n-3}e^{2-4s}\\
&-\alpha_{m+1}e^{3-8s}D_{m-1}.\end{split}\ee Substituting
\eqref{a11} into equations \eqref{hm22} and \eqref{hm21}, yields
\be\label{a21b}
\alpha_2=p_9-p_{10}\alpha_{m-1}-p_{11}\alpha_m-p_{12}\alpha_{m+1}\ee
and \be\label{a22b}
\alpha_2=p_9-p_{13}\alpha_{m-1}-p_{14}\alpha_m-p_{12}\alpha_{m+1}
,\ee respectively, where \be\begin{split} p_9&:=eD_m,\quad
p_{10}:=\frac{(\mathcal{H}_nq_{m-3})_{n-4}-e^{-1+5s}C_{m-3}}{e^{-5s}-e^{-3s}},\\
p_{11}&:=\frac{(\mathcal{H}_nq_{m-2})_{n-4}-e^s(\mathcal{H}_nq_{m-2})_{n-3}}{e^{-5s}-e^{-3s}},\quad
p_{12}:=eD_{m-1},\\
p_{13}&:=\frac{(\mathcal{H}_nq_{m-3})_{n-5}-e^{-1+6s}C_{m-3}}{e^{-6s}-e^{-2s}},
\quad
p_{14}:=\frac{e^{1-6s}D_{m-2}-e^{2s}(\mathcal{H}_nq_{m-2})_{n-3}}{e^{-6s}-e^{-2s}}.
\end{split}\ee

Let us prove that the equations \eqref{a21} and \eqref{a21b}
lead to a contradiction. From equations \eqref{a21} and \eqref{a22}
we obtain \be\label{alm}
\alpha_m=\frac{p_3-p_1}{p_4}+\frac{p_2-p_5}{p_4}\alpha_{m+1}. \ee
This together with \eqref{a23} and \eqref{a21} yield \be\label{alm1}
\alpha_{m+1}=\frac{\frac{p_3-p_1}{p_4}-\frac{p_6-p_1}{p_7}}{\frac{p_2-p_8}{p_7}-\frac{p_2-p_5}{p_4}}.
\ee

Equations \eqref{a21b}, \eqref{a22b} and \eqref{alm} yield \be
\alpha_{m-1}=\frac{p_{14}-p_{11}}{p_{10}-p_{13}}\alpha_m=\frac{p_{14}-p_{11}}{p_{10}-p_{13}}\left(
\frac{p_3-p_1}{p_4}+\frac{p_2-p_5}{p_4}\alpha_{m+1}\right).\ee This
together with \eqref{a21b} imply \be\label{a2l}
\alpha_2=p_9-\left(p_{10}\frac{p_{14}-p_{11}}{p_{10}-p_{13}}+p_{11}\right)\left(
\frac{p_3-p_1}{p_4}+\frac{p_2-p_5}{p_4}\alpha_{m+1}\right)-p_{12}\alpha_{m+1},
\ee where $\alpha_{m+1}$ is given in \eqref{alm1}. Let \be\label{La}
L_1:=p_9-\left(p_{10}\frac{p_{14}-p_{11}}{p_{10}-p_{13}}+p_{11}\right)\left(
\frac{p_3-p_1}{p_4}+\frac{p_2-p_5}{p_4}\alpha_{m+1}\right)-p_{12}\alpha_{m+1}
\ee and \be\label{Lb} L_2:=p_1-p_2\alpha_{m+1}. \ee Then, from
\eqref{a21} and \eqref{a21b} one gets \be\label{L1L} L_1-L_2=0.\ee
Applying formulas \eqref{Hqm1}-\eqref{Hqj} in \eqref{La} and
\eqref{Lb} and using the relation $e^s=\sum_{j=0}^\infty
\frac{s^j}{j!}$, we obtain \be\begin{split}\label{LL2}
L_1-L_2&=\frac{2 e^{4 s} [1 + e^{2 s}(-1 + s) + s] (-1 + e^{2 s} - 2
e^s s)}{(-1 +
    e^{2 s}) s [3 + 4 s - 2 e^s s + e^{2 s} (-3 + 4 s)]}\\
&=\frac{2 e^{4
s}\left[\sum_{j=3}^\infty\frac{2^{j}(\frac{1}{2}-\frac{1}{j})}{(j-1)!}s^j\right]\left[\sum_{j=2}^\infty\frac{2(\frac{2^{j}}{j+1}-1)}{j!}s^{j+1}\right]}
{(-1 +
    e^{2 s}) s  [3 + 4 s - 2 e^s s + e^{2 s} (-3 + 4 s)] }>0,
\end{split}\ee because $e^{2s}> 1$ for all $0<s<1$,
$2^{j}(\frac{1}{2}-\frac{1}{j})>0$ for all $j\geq 3$,
$2(\frac{2^{j}}{j+1}-1)>0$ for all $j\geq 2$ and $3 + 4 s - 2 e^s s
+ e^{2 s} (-3 + 4 s)>0$ which was proved in \eqref{r1r3}. Inequality
\eqref{LL2} contradicts relation \eqref{L1L} which proves that
$\mathcal{H}_nq_j$,
$j=-1,0,1,2,\hdots, m$, are linearly independent.\\
\thmref{thm13} is proved.
\end{proof}

\section{Numerical experiments}
Note that for all $w,u,v\in \R^n$ we have \be
\sum_{l=1}^nw_lu_lv_l=v^tWu, \ee where $t$ stands for transpose and
\be\label{e106} W:=\left(
                                                               \begin{array}{ccccc}
                                                                 w_1 &0 & \hdots &0 & 0 \\
                                                                 0 & w_2 & 0 & \hdots & 0 \\
                                                                 \vdots & \ddots &\ddots & \ddots &0 \\
                                                                 \vdots & \hdots & 0 & w_{n-1} & 0\\
                                                                 0 & 0 & \hdots &0 & w_n \\
                                                               \end{array}
                                                             \right).
 \ee
Then \be\begin{split}\label{e107}
DP&:=\|\mathcal{H}_n(f-Rh_m)\|_{w^{(n)},1}^2\\
&=\sum_{l=1}^nw_l^{(n)}[(f(x_l)-Rh_m(x_l))^2+(f'(x_l)-(Rh_m)'(x_l))^2]\\
&=[\mathcal{H}_n(f-Rh_m)]^tW\mathcal{H}_n(f-Rh_m)\\
&+[\mathcal{H}_n(f'-(Rh_m)')]^tW\mathcal{H}_n(f'-(Rh_m)'),
\end{split}\ee where $W$ is defined in \eqref{e106} with
$w_j=w_j^{(n)}$, $j=1,2,\hdots,n$, defined in \eqref{e22}. The
vectors $\mathcal{H}_nRh_m$ and $\mathcal{H}_n(Rh_m)'$ are computed
as follows.

Using \eqref{Hqm1}-\eqref{Hqj}, the vector $\mathcal{H}_nRh_m$ can
be represented by \be\label{e109} \mathcal{H}_nRh_m=S_mc^{(m)},\ee
where $c^{(m)}=\left(
                                                  \begin{array}{c}
                                                    c_{-1}^{(m)} \\
                                                    c_{0}^{(m)} \\
                                                    c_{1}^{(m)} \\
                                                    \vdots \\
                                                    c_{m}^{(m)} \\
                                                  \end{array}
                                                \right)$ and
$S_m$ is an $n\times (m+2)$ matrix with the following entries:
\be\begin{split} (S_m)_{i,1}&=(\mathcal{H}_nq_{-1})_{i},\quad
(S_m)_{i,2}=(\mathcal{H}_nq_0)_{i},\ i=1,2,\hdots,n,\\
(S_m)_{i,j}&=(\mathcal{H}_nq_{j-2})_{i}, \ i=1,2,\hdots,n,\
j=3,4,\hdots,m+2.
\end{split}\ee

The vector $\mathcal{H}_n(Rh_m)'$ is computed as follows. Let \be
J_{i,j}:=e^{x_i}\int_{x_i}^1e^{-y}\vp_j(y)dy-e^{-x_i}\int_{-1}^{x_i}e^{y}\vp_j(y)dy,\
i=1,2,\hdots,n-1,\ j=1,2,\hdots,m. \ee This together with
\eqref{e50} yield \be\begin{split}\label{e112} J_{i,1}&=\left\{
                                                     \begin{array}{ll}
                                                       e^{-1}\frac{e(2l-1 + e^{-2l})}{2l}, & \hbox{$i=1$,} \\
                                                       \frac{e^{-l} (1 - e^l + l)}{l}, & \hbox{$i=2$,}\\
                                                       -\frac{e^{-x_i}(-1 + e^{2l} - 2 l)}{2le }, & \hbox{$i\geq 3$,}
                                                     \end{array}
                                                   \right. \\
                                                   J_{i,j}&=\left\{
                      \begin{array}{ll}
                        e^{x_i}\frac{e^{1 - 2 j l} (-1 + e^{2 l})^2}{2 l}, & \hbox{$i\leq 2j-3$;} \\
                        \frac{2 + e^{-3 l} - 3e^{-l}}{2 l}, & \hbox{$i=2j-2$;} \\
                        0, & \hbox{$i=2j-1$;} \\
                        -\frac{2 + e^{-3 l} - 3 e^{-l}}{2 l}, & \hbox{$i=2j$;} \\
                        -e^{-x_i}\frac{e^{-1 + 2 (-2 + j) l} (-1 + e^{2 l})^2}{2 l}, & \hbox{$i\geq 2j+1$,}
                      \end{array}
                    \right.\ 1\leq i\leq n,\ 1<j<m, \\
J_{i,m}&=\left\{
                                                     \begin{array}{ll}
                                                       \frac{e^{x_i}(-1 + e^{2 l} - 2 l)}{2 le}, & \hbox{$i\leq n-1$,} \\
                                                       \frac{e^{-l} (-1 + e^l - l)}{l}, & \hbox{$i=n$,}
                                                     \end{array}
                                                   \right.\end{split} \ee where $l=\frac{2}{n}$.

Then, using \eqref{e112}, the vector $\mathcal{H}_n(Rh_m)'$ can be
rewritten as follows: \be \mathcal{H}_n(Rh_m)'=T_mc^{(m)}, \ee where
$c^{(m)}=\left(
                                                  \begin{array}{c}
                                                    c_{-1}^{(m)} \\
                                                    c_{0}^{(m)} \\
                                                    c_{1}^{(m)} \\
                                                    \vdots \\
                                                    c_{m}^{(m)} \\
                                                  \end{array}
                                                \right)$ and $T_m$ is an $n\times
(m+2)$ matrix with the following entries: \be\begin{split}
(T_m)_{i,1}&=-e^{-(1+x_i)},\quad
(T_m)_{i,2}=e^{-(1-x_i)},\ i=1,2,\hdots,n,\\
(T_m)_{i,j}&=J_{i,j-2}, \ i=1,2,\hdots,n,\ j=3,4,\hdots,m+2.
\end{split}\ee
We consider the following examples discussed in \cite{RAMM4}:
\begin{enumerate}
\item[(1)] $f(x)=-2+2\cos(\pi(x+1))$ with the exact solution $h(x)=-1+(1+\pi^2)\cos(\pi(x+1))$.
\item[(2)] $f(x)=-2e^{x-1}+\frac{2}{\pi}\sin(\pi(x+1))+2\cos(\pi(x+1))$ with the exact solution
 $h(x)=\frac{1}{\pi}\sin(\pi(x+1))+(1+\pi^2)\cos(\pi(x+1))$.
\item[(3)]$f(x)=\cos(\frac{\pi(x+1)}{2})+4\cos(2\pi(x+1))-1.5\cos(\frac{7\pi(x+1)}{2})$ with the exact solution
$h(x)=\frac{1}{2}(1+\frac{\pi^2}{4})\cos(\frac{\pi(x+1)}{2})+(2+8\pi^2)\cos(2\pi(x+1))-0.75(1+12.25\pi^2)\cos(\frac{7\pi(x+1)}{2})+1.75\dl(x+1)+2.25\dl(x-1)$.
\item[(4)] $f(x)=e^{-x}+2\sin(2\pi(x+1))$ with the exact solution $h(x)=(1+4\pi^2)\sin(2\pi(x+1))+(e-2\pi)\dl(x+1)+2\pi\dl(x-1)$.

\end{enumerate}

In all the above examples we have $f\in C^2[-1,1]$. Therefore, one
may use the basis functions given in \eqref{e50}. In each example we
compute the relative pointwise errors: \be\label{e115}
RPE(t_i):=\frac{|g_m(t_i)-g(t_i)| }{\max_{1\leq i\leq M}|g(t_i)|},
\ee where $g(x)$ and $g_m(x)$ are defined in \eqref{e6} and
\eqref{e6b}, respectively, and \be t_i:=-1+(i-1)\frac{2}{M-1},\quad
i=1,2,\hdots,M. \ee The algorithm can be written as follows.

\begin{itemize}
\item[Step 0.] Set $k=3$, $n=2k$, $m=\frac{n}{2}+1$, $\epsilon\in
(0,1)$ and $DP\geq 10$, where $DP$ is defined in \eqref{e107}.
\item[Step 1.] Construct the weights $w_j^{(n)}$, $j=1,2,\hdots,n$, defined in \eqref{e22}.
\item[Step 2.]Construct the matrix $A_{m+2}$ and the vector
$F_{m+2}$ which are defined in \eqref{e29}.
\item[Step 3.] Solve for $c:=\left(
                              \begin{array}{c}
                                c_{-1}^{(m)} \\
                                c_0^{(m)} \\
                                c_1^{(m)} \\
                                \vdots \\
                                c_m^{(m)}
                              \end{array}
                            \right)
$ the linear algebraic system $A_{m+2}c=F_{m+2}$.
\item[Step 4.] Compute \be
DP=\|\mathcal{H}_n(f-Rh_m)\|_{w^{(n)},1}^2.\ee
\item[Step 5.] If $DP>\epsilon$ then set $k=k+1$, $n=2k$
and $m=\frac{n}{2}+1$, and go to Step 1. Otherwise, stop the
iteration and use $h_m(x)=\sum_{j=-1}^{m}c_j^{(m)}\vp_j(x)$ as
the approximate solution, where $\vp_{-1}(x):=\dl(1+x)$,
$\vp_0(x):=\dl(x-1)$ and $\vp_j(x),\ j=1,2,\hdots,m$, are
defined in \eqref{e50} and $c_j^{(m)}$, $j=-1,0,1,\hdots,m$, are
obtained in Step 3.
\end{itemize}

In all the experiments the following parameters are used: $ M=200$
and $\epsilon=10^{-4}, \ 10^{-6},\ 10^{-8}$. We also compute the
relative error \be RE:=\max_{1\leq i\leq M}RPE(t_i), \ee where $RPE$
is defined in \eqref{e115}. Let us discuss the results of our
experiments.\\

\begin{table}[htp]\caption{Example $1$  }
\newcommand{\m}{\hphantom{$-$}}
\renewcommand{\tabcolsep}{.85pc} 
\renewcommand{\arraystretch}{1.2} \begin{center}
\begin{tabular}{ccccccc}
\hline
$n$ & $m$&$\epsilon$& $a_{-1}$&$c^{(m)}_{-1}$& $a_0$& $c^{(m)}_0$\\
\hline
$24 $& $13 $&$10^{-4}$& $0$&$8.234\times 10^{-5} $& $    0 $ & $  -5.826\times 10^{-3}    $ \\
$32 $& $17 $&$10^{-6}$& $0$&$3.172\times 10^{-5} $& $    0 $ & $  -1.202\times 10^{-3}    $ \\
$56 $& $29 $ & $10^{-8}$& $    0  $&$    3.974\times 10^{-6} $& $    0 $ & $  -6.020\times 10^{-5} $ \\

\hline \hline
$n$& $m$&$\epsilon$& $DP$ & $RE$& &  \\
\hline
$24 $& $13 $&$10^{-4}$&$    8.610\times 10^{-6}   $&$    3.239\times 10^{-2}  $ & & \\
$32 $& $17 $&$10^{-6}$&$    7.137\times 10^{-7}   $&$    1.554\times 10^{-2}  $ & & \\
$56 $& $29 $&$10^{-8}$& $    6.404\times 10^{-9}   $&$    4.337\times 10^{-3} $ & & \\
\hline
\end{tabular}\end{center}
\end{table}

\noindent \textbf{Example 1.} In this example the coefficients
$a_{-1}$ and $a_0$, given in \eqref{e5} and \eqref{e6},
respectively, are zeros. Our experiments show, see Table 1, that the
approximate coefficients $c^{(m)}_{-1}$ and $c^{(m)}_0$ converge to
$a_{-1}$ and $a_0$, respectively, as $\epsilon\to 0$. Here to get
$DP\leq 10^{-6}$, we need $n=32$ collocation points distributed
uniformly in the interval $[-1,1]$. Moreover, the matrix $A_{m+2}$
is of the size $19$ by $19$ which is  small. For $\epsilon=10^{-6}$
the relative error $RE$ is of order $10^{-2}$. The $RPE$ at the
points $t_j$ are distributed in the interval $[0,0.018)$ as shown in
Figure 2. In computing the approximate solution $h_m$ at the points
$t_i$, $i=1,2,\hdots,M$, one needs at most two out of $m=17$ basis
functions $\vp_j(x)$. The reconstruction of the continuous part of
the exact solution can be seen in Figure 1. One can see from this
Figure that for $\epsilon=10^{-6}$ the continuous part $g(x)$ of the
exact solution $h(x)$ can be recovered very well by the approximate
function $g_m(x)$ at the points $t_j$, $j=1,2,\hdots, M$.

\begin{figure}[htp]
\begin{center}
  \includegraphics[width=10cm]{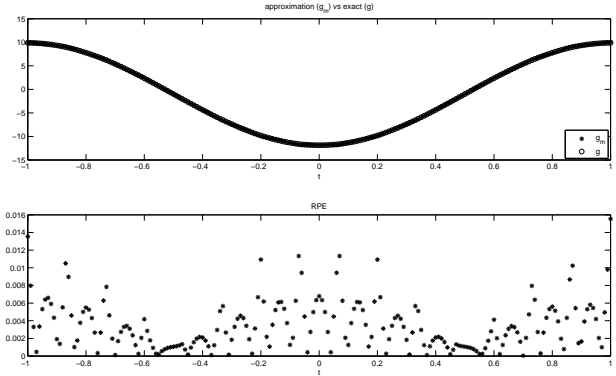}\\
  \caption{A reconstruction of the continuous part $g(x)$ (above) of Example 1 with $\epsilon=10^{-6}$, and the corresponding Relative Pointwise Errors (RPE) (below) }\label{fg1}
\end{center}
\end{figure}

\begin{table}[htp]\caption{Example $2$  }
\newcommand{\m}{\hphantom{$-$}}
\renewcommand{\tabcolsep}{.85pc} 
\renewcommand{\arraystretch}{1.2} \begin{center}
\begin{tabular}{ccccccc}
\hline
$n$ & $m$&$\epsilon$& $a_{-1}$&$c^{(m)}_{-1}$& $a_0$& $c^{(m)}_0$\\
\hline
$24 $& $13 $ & $  10^{-4}$& $    0   $&$    1.632\times 10^{-4} $& $    0 $ & $  -1.065\times 10^{-2} $ \\
$32 $& $17 $ & $  10^{-6}$& $    0   $&$    5.552\times 10^{-5} $& $    0 $ & $  -2.614\times 10^{-3} $ \\
$56 $& $29 $ & $10^{-8}$& $    0   $&$    6.268\times 10^{-6} $& $   0 $ & $  -1.954\times 10^{-4} $ \\

\hline \hline
$n$& $m$&$\epsilon$& $DP$ & $RE$& &  \\
\hline
$24 $& $13 $&$  10^{-4}$& $    8.964\times 10^{-6}   $&$    3.871\times 10^{-2} $ & & \\
$32 $& $17 $&$  10^{-6}$& $    7.588\times 10^{-7}   $&$    1.821\times 10^{-2} $ & & \\
$56 $& $29 $&$  10^{-8}$& $    6.947\times 10^{-9}   $&$    4.869\times 10^{-3} $& &  \\
\hline
\end{tabular}\end{center}
\end{table}

\noindent \textbf{Example 2.} This example is a modification of
Example 1, where the constant $-2$ is replaced with the function
$-2e^{x-1}+\frac{2}{\pi}\sin(\pi(x+1))$. In this case the
coefficients $a_{-1}$ and $a_0$ are also zeros. The results can be
seen in Table 2. As in Example 1, both approximate coefficients
$c^{(m)}_{-1}$ and $c^{(m)}_0$ converge to 0 as $\epsilon\to 0$. The
number of collocation points at each case is equal to the number of
collocation points obtained in Example 1. Also the $RPE$ at each
observed point is in the interval $[0,0.02)$. One can see from
Figure 3 that the continuous part $g(x)$ of the exact solution
$h(x)$ can be well approximated by the approximate function $g_m(x)$
with
$\epsilon=10^{-6}$ and $RE=O(10^{-2}).$\\

\begin{figure}[htp]
\begin{center}
  \includegraphics[width=10cm]{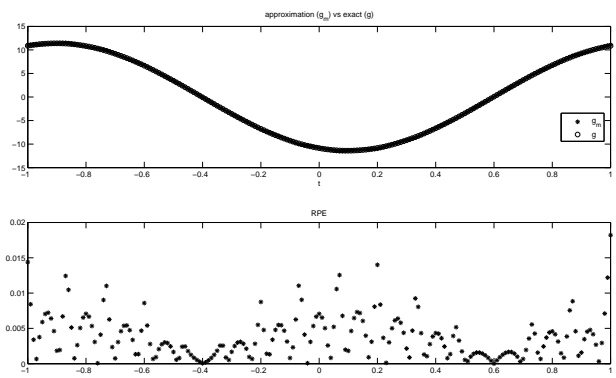}\\
  \caption{A reconstruction of the continuous part $g(x)$ (above) of Example 2 with $\epsilon=10^{-6}$, and the corresponding Relative Pointwise Errors (RPE) (below) }\label{fg2}
\end{center}
\end{figure}

\begin{table}[htp]\caption{Example $3$  }
\newcommand{\m}{\hphantom{$-$}}
\renewcommand{\tabcolsep}{.85pc} 
\renewcommand{\arraystretch}{1.2} \begin{center}
\begin{tabular}{ccccccc}
\hline
$n$ & $m$&$\epsilon$& $a_{-1}$&$c^{(m)}_{-1}$& $a_0$& $c^{(m)}_0$\\
\hline
$80 $& $41 $ & $  10^{-4}$& $    1.750   $&$    1.750 $& $    2.250 $ & $   2.236 $ \\
$128 $& $65 $ & $  10^{-6}$& $    1.750   $&$    1.750 $& $    2.250 $ & $   2.249 $ \\
$232 $& $117 $ & $ 10^{-8}$& $    1.750   $&$    1.750 $& $    2.250 $ & $   2.250 $ \\
\hline \hline
$n$& $m$&$\epsilon$& $DP$ & $RE$& &  \\
\hline $80 $& $41 $&$  10^{-4}$& $    4.635\times 10^{-5}   $&$ 2.282\times 10^{-2} $& &  \\
$128 $& $65 $&$  10^{-6}$& $    9.739\times 10^{-7}   $&$    7.671\times 10^{-3} $ & & \\
$232 $& $117 $&$ 10^{-8}$& $    7.804\times 10^{-9}   $&$    2.163\times 10^{-3} $ \\
\hline
\end{tabular}\end{center}
\end{table}

\noindent\textbf{Example 3.} In this example the coefficients of the
distributional parts $a_{-1}$ and $a_0$ are not zeros. The function
$f$ is  oscillating more than the functions $f$ given in Examples 1
and 2, and the number of collocation points is larger than in the
previous two examples, as shown in Table 3. In this table one can
see that the approximate coefficients $c^{(m)}_{-1}$ and $c^{(m)}_0$
converge to $a_{-1}$ and $a_0$, respectively. The continuous part of
the exact solution can be approximated by the approximate function
$g_m(x)$ very well with $\epsilon=10^{-6}$ and $RE=O(10^{-3}) $ as
shown in Figure 4. In the same Figure one can see that the $RPE$ at
each observed point is distributed in the interval $[0,8\times
10^{-3}).$

\begin{figure}[htp]
\begin{center}
  \includegraphics[width=10cm]{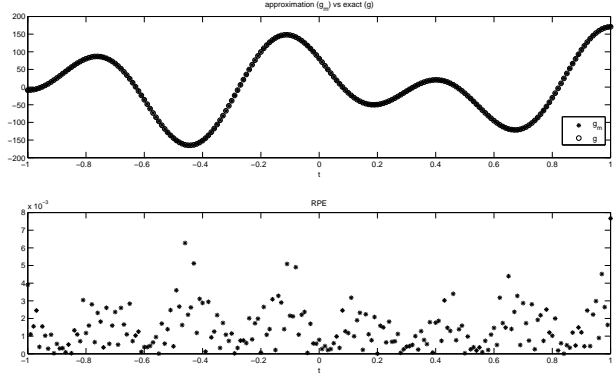}\\
  \caption{A reconstruction of the continuous part $g(x)$ (above) of Example 3 with $\epsilon=10^{-6}$, and the corresponding Relative Pointwise Errors (RPE) (below) }\label{fg3}
\end{center}
\end{figure}

\noindent \textbf{Example 4.} Here we give another example of the
exact solution $h$ having non-zero coefficients $a_{-1}$ and $a_0$.
In this example the function $f$ is oscillating less than the $f$ in
Example 3, but more than the $f$ in examples 1 and 2. As shown in
Table 4 the number of collocation points $n$ is smaller than the the
number of collocation points given in Example 3. In this example the
exact coefficients $a_{-1}$ and $a_0$ are obtained at the error
level $\epsilon=10^{-8}$ which is shown in Table 4. Figure 5 shows
that at the level $\epsilon=10^{-6}$ we have obtained a good
approximation of the continuous part $g(x)$ of the exact solution
$h(x)$. Here the relative error $RE$ is of order $O(10^{-2})$.

\begin{table}[htp]\caption{Example $4$  }
\newcommand{\m}{\hphantom{$-$}}
\renewcommand{\tabcolsep}{.85pc} 
\renewcommand{\arraystretch}{1.2} \begin{center}
\begin{tabular}{ccccccc}
\hline
$n$ & $m$&$\epsilon$& $a_{-1}$&$c^{(m)}_{-1}$& $a_0$& $c^{(m)}_0$\\
\hline
$40 $& $21 $ & $  10^{-4}$& $   -3.565   $&$   -3.564 $& $    6.283 $ & $   6.234 $ \\
$72 $& $37 $ & $  10^{-6}$& $   -3.565   $&$   -3.565 $& $    6.283 $ & $   6.279 $ \\
$128 $& $65 $ & $10^{-8}$& $   -3.565   $&$   -3.565 $& $    6.283 $ & $   6.283 $ \\
\hline \hline
$n$& $m$&$\epsilon$& $DP$ & $RE$& &  \\
\hline
$40 $& $21 $&$   10^{-4}$& $    8.775\times 10^{-5}   $&$    3.574\times 10^{-2} $ & & \\
$72 $& $37 $&$   10^{-6}$& $    6.651\times 10^{-7}   $&$    1.029\times 10^{-2} $ & & \\
$128 $& $65 $&$  10^{-8}$& $    6.147\times 10^{-9}   $&$    3.199\times 10^{-3} $ & &\\
\hline
\end{tabular}\end{center}
\end{table}

\begin{figure}[htp]
\begin{center}
  \includegraphics[width=10cm]{}\\
  \caption{A reconstruction of the continuous part $g(x)$ (above) of Example 4 with $\epsilon=10^{-6}$, and the corresponding Relative Pointwise Errors (RPE) (below) }\label{fg4}
\end{center}
\end{figure}

\newpage
\newpage


\begin{thebibliography}{00}
\bibitem{PDPR84} P.J. Davis and P. Rabinowitz, \textit{Methods of numerical
integration}, Academic Press, London, 1984.
\bibitem{KA} L.Kantorovich and G.Akilov, \textit{Functional Analysis},
Pergamon Press, New York, 1980.
\bibitem{MP} S. Mikhlin, S. Pr\"ossdorf, \textit{Singular integral
operators}, Springer-Verlag, Berlin, 1986.
\bibitem{RAMM1} A.G. Ramm, \textit{Theory and Applications of Some New
Classes of Integral Equations}, Springer-Verlag, New York, 1980.
\bibitem{R486} A.G. Ramm, \textit{Random Fields Estimation}, World Sci. Publishers,
Singapore, 2005.
\bibitem{RAMM3} A.G. Ramm, Numerical solution of integral equations in a space of distributions, J. Math. Anal. Appl., 110, (1985), 384-390.
\bibitem{RAMM4}A.G. Ramm and Peiqing Li, Numerical solution of some integral equations in distributions, Computers Math. Applic., 22, (1991), 1-11.
\bibitem{RAMM5} A.G. Ramm, Collocation method for solving some integral
equations of estimation theory, Internat. Journ. of Pure and Appl.
Math., 62, N1, (2010).
\bibitem{ER80} E. Rozema, Estimating the error in the trapezoidal
rule, The American Mathematical Monthly, 87, 2, (1980), 124-128.
\bibitem{LLS} L.L. Schumaker, \textit{Spline Functions: Basic
Theory}, Cambridge University Press, Cambridge, 2007.
\end{thebibliography}
\end{document}